\documentclass[11pt,reqno]{amsart}
\usepackage{amssymb}
\usepackage{enumerate}
\usepackage{amsmath,amssymb,amsfonts,amsthm,
	latexsym, amscd, epsfig, epic,color}
\usepackage{epstopdf}
\usepackage{mathtools}
\usepackage{youngtab}
\usepackage{extarrows}
\usepackage{enumitem}
\usepackage{enumerate}
\usepackage{mathrsfs}
\usepackage{eucal}%    caligraphic-euler fonts: \mathcal{ }
\usepackage{eufrak}%   frak-euler        fonts: \mathfrak{ }
\usepackage{xspace}
\usepackage{a4wide}
\usepackage{colordvi}
\usepackage{comment}
\usepackage{hyperref,eucal}
\usepackage{tabularx}
\usepackage{tikz}\usetikzlibrary{matrix,arrows}
\usepackage{ytableau} \usepackage{chemfig}
\usepackage{float}
\usepackage{youngtab}
\usepackage{mathrsfs}
\usepackage{caption}
\usepackage{epsfig}
\usepackage{stfloats}
\usepackage{booktabs}
\usepackage{changepage}
\usepackage{graphicx}
\theoremstyle{plain}

\theoremstyle{plain}

\newtheorem{theorem}{Theorem}[section]

\newtheorem{lemma}{Lemma}[section]
\newtheorem{proposition}{Proposition}[section]

\newtheorem{definition}{Definition}[section]

\theoremstyle{example}
\newtheorem{example}{Example}[section]

\theoremstyle{openproblem}

\def\T{\CMcal{T}}

\def\S{\CMcal{S}}

\def\maj{\mathsf{maj}}

\makeatletter
\@namedef{subjclassname@2020}{\textup{2020} Mathematics Subject Classification}
\makeatother

\newcommand{\bh}{\mathbf{h}}

\newcommand{\Sym}{\mathrm{Sym}}
\newcommand{\HL}{\mathrm{HL}}

\newcommand{\slashp}{\mid}
\newcommand{\suchthat}{\,:\,}

\newcommand{\bx}{{\mathbf x}}
\newcommand{\bX}{{\mathbf X}}
\newcommand{\bm}{{\mathbf m}}
\newcommand{\be}{{\mathbf e}}
\newcommand{\bs}{{\mathbf s}}
\newcommand{\bp}{{\mathbf p}}
\newcommand{\bS}{{\mathbf S}}
\newcommand{\bM}{{\mathbf M}}
\newcommand{\bP}{{\mathbf P}}
\newcommand{\bL}{{\mathbf L}}

\newcommand{\bPP}{\boldsymbol{\CMcal{P}}}

\theoremstyle{remark}

\newtheorem{remark}{Remark}
\numberwithin{equation}{section}
\numberwithin{figure}{section}
\title[Hall-Littlewood functions in noncommuting variables]{Hall-Littlewood functions in noncommuting variables}

\author{Soojin Cho}
\address{Department of Mathematics, Ajou University, Suwon 16499, Republic of Korea}
\email{chosj@ajou.ac.kr}

\author{Emma Yu Jin}
\address{School of Mathematical Sciences, Xiamen University, Xiamen 361005, China}
\email{yjin@xmu.edu.cn}

\subjclass[2020]{Primary 05E05; Secondary 05E18, 16T30}

\keywords{Hall-Littlewood functions, noncommuting variables, Schur functions, Littlewood-Richardson rule}
\date{\today}

\begin{document}

\begin{abstract}
	In 2022 Aliniaeifard, Li, and van Willigenburg defined Schur functions in the algebra of symmetric functions in noncommuting variables (NCSym), answering an open question posed by Rosas and Sagan in 2004. These Schur functions are not monomial positive, since they are defined via a noncommutative analogue of the Jacobi-Trudi determinant.
	
	We introduce Hall-Littlewood functions $\bP_{\pi}(\bx;t)$ indexed by set partitions $\pi$ in noncommuting variables $\bx=(\bx_1,\bx_2,\ldots)$, and define Schur functions in noncommuting variables to be $\bs_{\pi}(\bx)=\bP_{\pi}(\bx;0)$. Just as the classical Hall-Littlewood functions in commuting variables, our Hall-Littlewood functions $\bP_{\pi}(\bx;t)$ in NCSym interpolates between monomial symmetric functions $\bm_{\pi}(\bx)$ and Schur functions $\bs_{\pi}(\bx)$.
	
	We prove that the set of Hall-Littlewood functions $\{\bP_{\pi}(\bx;t)\}$ for all set partitions $\pi$ of $[n]$ forms a $\mathbb{Q}[t]$-basis of NCSym of homogeneous degree $n$, and that this basis is invariant under any permutation acting on set partitions. These Hall-Littlewood functions in NCSym map to classical Hall-Littlewood functions under commutation, up to a scalar factor. 
	
	We also show that the Hall-Littlewood functions $\bP_{\pi}(\bx;t)$ naturally refine the lifted Hall-Littlewood functions in NCSym. Specifically, the Schur functions $\bs_{\pi}(\bx)$ are monomial positive and refine the lifted Schur function introduced by Rosas and Sagan. Moreover, we introduce a star product of two polynomials in NCSym and develop the star-multiplication rule for a lifted and a non-lifted Hall-Littlewood functions in NCSym. This rule is a noncommutative analogue of the product rule for two Hall-Littlewood functions and, in particular, of the Littlewood-Richardson rule. Finally, our approach extends to the algebra of quasisymmetric functions in noncommuting variables (NCQSym) indexed by set compositions.

\end{abstract}
	\maketitle
	%%%%%%%%%%%%%%%%%%%%%%%%%%%%%%%%%%%%    
	\section{Introduction and main results}
	The Hall-Littlewood (HL) functions $P_{\lambda}(X;t)$ indexed by partitions $\lambda$ are symmetric and orthogonal polynomials in infinitely many variables $X=(x_1,x_2,\ldots)$ with coefficients in the ring $\mathbb{Z}[t]$ of polynomials \cite[Chapter III]{Mac95}. They were originally discovered by Philip Hall in terms of Hall algebra \cite{Hall} and later thoroughly developed by D.E. Littlewood \cite{Littlewood:61}. The Hall-Littlewood function $P_{\lambda}(X;t)$ is a specialization of symmetric Macdonald polynomials $P_{\lambda}(X;q,t)$ at $q=0$, which interpolates between the monomial symmetric functions $m_{\lambda}(X)=P_{\lambda}(X;1)$ and the Schur function $s_{\lambda}=P_{\lambda}(X;0)$. 
	
	The HL functions $P_{\lambda}(X;t)$ form an orthogonal $\mathbb{Z}[t]$-basis in the algebra of symmetric functions in commuting variables (Sym). The objective of this paper is to develop a $\mathbb{Q}[t]$-basis of HL functions in noncommuting variables. Let us briefly review the history of symmetric functions in noncommuting variables (NCSym).
	
	The study of NCSym was initiated in 1936 by Wolf \cite{Wolf}, who produced an analogue of fundamental theorem of symmetric functions in this setting. The noncommutative idea remained inactive for over three decades until Bergman and Cohn revisited Wolf's work and generalized her result \cite{BC:69}. The major breakthrough came only in 2004 when Rosas and Sagan constructed noncommutative analogues of classical symmetric function concepts such as the monomial, power-sum, elementary and complete homogeneous symmetric functions, each of which can project to its scaled commutative counterpart \cite{RS}.
	
	Rosas and Sagan posed a question \cite[Section 9]{RS} on the Schur basis of NCSym that behaves like the classical Schur functions, and that specializes to Schur functions under the projection map. 
	In response, in 2022 Aliniaeifard, Li and van Willigenburg \cite{alw:22} resolved this question by providing the first Schur basis for NCSym indexed by set partitions that exhibits many properties analogous to the classical Schur functions. However, unlike the Schur functions for Sym, this Schur function is not a positive sum of monomial symmetric functions (monomial positive or $m$-positive). Moreover, it is not clear how to generalize this Schur function to an HL function for NCSym.
	
	In this paper, we introduce HL functions $\bP_{\pi}(\bx;t)$ in noncommuting variables $\bx=(\bx_1,\bx_2,\ldots)$ indexed by all set partitions $\pi$, and the new Schur function is defined by $\bs_{\pi}(\bx)=\bP_{\pi}(\bx;0)$. Some key properties of $\bP_{\pi}(\bx;t)$ are summarized as follows:
	\begin{enumerate}[label=\Roman*.]
		\item The HL functions $\bP_{\pi}(\bx;t)$ form a $\mathbb{Q}[t]$-basis of NCSym, and that this basis is invariant under any permutation acting on set partitions (see Theorem \ref{T:basis}). These polynomials $\bP_{\pi}(\bx;t)$ map to the classical HL functions $P_{\lambda}(X;t)$ under the projection map, and they interpolate between Schur functions $\bs_{\pi}(\bx)$ and monomial symmetric functions $\bm_{\pi}(\bx)$, up to a scalar factor (see Proposition \ref{prop:basic}).\\
		\item The HL functions $\bP_{\pi}(\bx;t)$ naturally refine the lifted HL function indexed by a partition (see Theorem \ref{T:sumP1}). Specifically, the Schur functions $\bs_{\pi}(\bx)$ as a positive sum of monomial symmetric functions
		 refine the lifted Schur function defined by Rosas and Sagan \cite{RS}; the Schur $P$-function $\bP_{\pi}(\bx;-1)$ refines the lifted Schur $P$-function.\\
		\item The star product $\star$ of two symmetric functions in NCSym is introduced. Although 
		the usual product or the star product of an HL function $\bP_{\pi}(\bx;t)$ and a Schur function $\bs_{1}(\bx)$ is not even a positive sum of HL functions, it turns out that the star product of a lifted HL function and a non-lifted HL function can be expanded as a positive sum of HL functions (see Theorem \ref{T:pieri1}). This is a noncommutative version of the well-known product rule for two HL functions in Sym, and in particular, of the Littlewood-Richardson rule.
	\end{enumerate}
    In addition, our approach can be applied to define the $\mathbb{Z}[t]$-bases of quasisymmetric functions in noncommuting variables (NCQSym) indexed by set compositions (equivalently, ordered set partitions), and specializations such as quasi-Schur functions, thereby discovering the quasisymmetric Littlewood-Richardson rule in the noncommutative setting; see Theorems \ref{T:quasi-basis} -- \ref{T:pieri2}.
    
	The remainder of the paper is structured as follows. Section \ref{S:2} provides the relevant background on symmetric functions. In Sections \ref{S:3} and \ref{S:4}, we introduce the Hall-Littlewood functions in NCSym and prove the main results (I)--(III). In Sections \ref{S:5} and \ref{S:6} we present the preliminaries on quasisymmetric functions and our quasisymmetric analogue of the main results.

	%%%%%%%%%%%%%%%%%%%%%%%%%%%%%%%%%%   
	\section{Background on symmetric functions}\label{S:2}
	We begin with preliminaries on the combinatorial concepts \cite{alw:22,BZ:09} we need, followed by the introduction of the algebra and symmetric functions under study. 
	
	\subsection{Compositions, partitions and set partitions}
	A {\em composition} $\alpha$ of $n$ is a finite sequence $\alpha=(\alpha_1, \dots, \alpha_l)$ of positive integers such that $\sum_{i=1}^{l} \alpha_i=n$, denoted by $\alpha\vDash n$. Each $\alpha_i$ is called a part of $\alpha$, $\ell(\alpha)=l$ is called the length of $\alpha$ and $|\alpha|=n$ is called the size of $\alpha$. We denote by $0$ the unique composition of length and size $0$. If the parts of $\alpha$ are allowed to include $0$, then we call this a {\em weak composition}. If the parts of $\alpha$ appear in weakly decreasing order, then we call this a {\em partition} $\lambda$, denoted by $\lambda\vdash n$. 
	
	Let $\lambda(\alpha)$ be the partition obtained by rearranging the parts of composition $\alpha$ in weakly decreasing order, and let $\alpha^+$ be the composition after removing $0$ from weak composition $\alpha$. Given a composition $\alpha=(\alpha_1,\ldots,\alpha_{\ell(\alpha)})$, we write the corresponding partition as $\lambda(\alpha)=\langle 1^{m_1}\cdots k^{m_k}\rangle$ where $i$ appears exactly $m_i$ times in $\alpha$, and define
	\begin{align*}
		\alpha!=\alpha_1!\cdots \alpha_{\ell(\alpha)}! \quad\mbox{ and }\quad \alpha^!=m_1!\cdots m_k!.
	\end{align*}
    Given two partitions $\lambda$ and $\mu$ of $n$, we say that $\lambda$ dominates $\mu$, written as $\mu\le \lambda$ if $\mu_1+\cdots+\mu_k\le \lambda_1+\cdots+\lambda_k$ for all $k>1$. For $k$ larger than the length of the partition, we extend the partition by adding zeros.
       
	\begin{example}
		If $\alpha=(2,1,3,2,2)$, then $\ell(\alpha)=5$, $|\alpha|=10$ and $\lambda(\alpha)=(3,2,2,2,1)$. Note that $\alpha!=2!1!3!2!2!=48$ and $\alpha^!=3!=6$. %and $\beta=(3,5,2)\succcurlyeq (1,2,1,3,2)$ as  $\mathrm{SET}(\beta)=\{3,8\}\subseteq \mathrm{SET}(\alpha)=\{2,3,6,8\}$.
	\end{example}
   A \emph{set partition} $\pi = \pi_1 / \cdots / \pi_l$ of $[n]$ is a set of disjoint nonempty subsets $\pi_1, \ldots, \pi_l$ of $[n]$ such that $\cup _{i=1}^{l} \pi_i = [n]$, denoted by $\pi \vdash [n]$. Each $\pi_i$ is called a {\em block} of $\pi$, $\ell(\pi)=l$ is called the \emph{length} of $\pi$, and $|\pi|=n$ is called the \emph{size} of $\pi$. Note that every set partition $\pi$ determines a partition $\lambda (\pi) = (\lambda(\pi) _1, \ldots , \lambda (\pi)_{\ell(\pi)})=\lambda(|\pi_1|, \dots, |\pi_l|)$, obtained by listing the cardinalities of the blocks of $\pi$ in weakly decreasing order. 
   We write 
   \begin{align*}
   	\pi!=\lambda(\pi)!\,\mbox{ and }\,\pi^!=\lambda(\pi)^!.
   \end{align*}
The {\em standard} representation of $\pi$ is obtained by arranging the blocks of $\pi$ so that, when read from left to right,
   \begin{enumerate}
   	\item the block sizes are weakly decreasing,
   	\item the smallest elements of blocks of equal size are strictly increasing, and
   	\item the elements within each block are strictly increasing.
   \end{enumerate}
   We then let  $\delta_{\pi}$ be the permutation written in one-line notation that is obtained by removing the bars between the blocks of $\pi$. We usually list the blocks in the above way, and denote by $\emptyset$ the unique set partition of length and size 0. For convenience, we always omit parentheses and commas when writing compositions and set partitions.  
   
   Given two set partitions $\pi \vdash [n]$ and $\sigma = \sigma _1 / \cdots / \sigma _{\ell(\sigma)} \vdash [m]$, we say that their \emph{slash product} $\pi \slashp \sigma$ is a set partition of $[n+m]$ consisting of blocks in $\pi$ and shifted blocks $\sigma _i+n = \{ s+n \suchthat s\in \sigma _i\}$ for $1\leq i \leq \ell(\sigma)$. Returning to compositions, given a composition $\alpha = (\alpha_1, \ldots, \alpha_{\ell(\alpha)})\vDash n$, its corresponding set partition $[\alpha]\vdash [n]$ is
   	\begin{align}\label{E:setalp}
   		[\alpha]&=[\alpha_1]\,\vert\, [\alpha_2]\,\vert \cdots\vert\, [\alpha_{\ell(\alpha)}],
   	\end{align}
   	where $[\alpha_i]=\{1,\ldots,\alpha_i\}$. Let $\mathfrak{S}_n$ be the set of permutations of $[n]$. A permutation $\delta\in \mathfrak{S}_n$ acts on a subset $B=\{i_1, \dots, i_m\}\subseteq [n]$ by $\delta B=\{ \delta(i_1), \dots, \delta(i_m)\}$. Hence,  $w\pi=w\pi_1/\cdots /w \pi_{\ell(\pi)}$ for $\pi=\pi_1/\cdots/\pi_{\ell(\pi)}\vdash [n]$. Consequently, any set partition $\pi$ with $\lambda(\pi)=\lambda$ can be expressed as 
   	\begin{align*}
   		\pi=\delta_{\pi}[\lambda].
   	\end{align*}
   	We also say that two set partitions $\pi, \sigma \vdash [n]$ satisfy $\pi\le \sigma$ if $\sigma$ is obtained from $\pi$ by merging blocks of $\pi$. Then this gives a lattice and the greatest lower bound of two set partitions is denoted by $\wedge$. 
   \begin{example}
   	If $\pi=24/1/3\vdash [4]$ and $\sigma=134/25\vdash [5]$, then $\pi\slashp\sigma=578/24/69/1/3\vdash [9]$. If $\alpha=(1,3,2,1,2)$, then $[\alpha]=[1]\slashp [3]\slashp [2]\slashp [1]\slashp [2]=234/56/89/1/7$.
   	
   		If $\pi= 2\,3\,5\,6\,7/1\,4\,8 /9\,11\,12 /10\,13 / 14$, then $\lambda(\pi)=\lambda=(5,3,3,2,1)$ and $\pi=\delta_{\pi}[\lambda]$, where $$\delta_{\pi}=2\,\,3\,\,5\,\,6\,\,7\,\,1\,\,4\,\,8\,\,9\,\, 11\,\,12\,\,10\,\,13\,\,14\in\mathfrak{S}_{14}$$ 
       written in one-line notation.
   \end{example}
    
  \subsection{Symmetric functions in commuting variables} 
  The graded Hopf algebra of symmetric functions, Sym, 
  \begin{align*}
  	\mathrm{Sym}=\mathrm{Sym}^0 \oplus \mathrm{Sym}^1 \oplus \cdots \subseteq \mathbb{Q}[[X]],
  \end{align*}
  where $\mathbb{Q}[[X]]=\mathbb{Q}[[x_1, x_2, \dots]]$ is the algebra of formal power series in the commuting variables $X=(x_1, x_2, \dots)$ over the rational field $\mathbb{Q}$, $\mathrm{Sym}^0=\mathrm{span}\{1\}$ and the $n$th graded piece for $n\ge 1$ has the bases 
  \begin{alignat*}{3}
  	\mathrm{Sym}^n&=\mathrm{span}\{m_{\lambda}:\lambda\vdash n\}&&=\mathrm{span}\{p_{\lambda}:\lambda\vdash n\}\\
  	&=\mathrm{span}\{e_{\lambda}:\lambda\vdash n\}&&=\mathrm{span}\{h_{\lambda}:\lambda\vdash n\}=\mathrm{span}\{s_{\lambda}:\lambda\vdash n\} &&
  \end{alignat*}
  where these bases are defined as follows, for a partition $\lambda=(\lambda_1,\ldots,\lambda_{\ell(\lambda)})$.
  
  The \emph{monomial symmetric function} $m_\lambda$ is defined by
  $$ m_\lambda=\sum x_{i_1}^{\lambda_1}\cdots x_{i_{\ell(\lambda)}}^{\lambda_{\ell(\lambda)}}\,,$$
  which is summed over all distinct monomials and the $i_j$ are also distinct. The \emph{elementary symmetric function} $e_\lambda$, the \emph{complete homogeneous symmetric function} $h_\lambda$ and the \emph{power-sum symmetric function} $p_\lambda$ are given by
  \begin{align}\label{E:prod}
  e_\lambda=e_{\lambda_1}\cdots e_{\lambda_{\ell(\lambda)}},\quad h_\lambda=h_{\lambda_1}\cdots h_{\lambda_{\ell(\lambda)}}\,\,\mbox{ and }\,\, p_\lambda=p_{\lambda_1}\cdots p_{\lambda_{\ell(\lambda)}}\,, 
  \end{align}
  where for each positive integer $r$, $$e_r=\sum_{ i_1<\cdots<i_r}x_{i_1}\cdots x_{i_r},\quad  h_r=\sum_{i_1\leq\cdots\leq i_r}x_{i_1}\cdots x_{i_r}\,\,\mbox{ and }\,\,p_r=\sum_{i=1}^{\infty} x_{i}^r\,.$$
  Given a partition $\lambda = (\lambda_1, \ldots, \lambda_{\ell(\lambda)})$, we say that its \emph{Young diagram}, also denoted by $\lambda$, is the array of left-justified boxes with $\lambda_i$ boxes in row $i$ from the top. Let $\lambda'$ be the transpose of $\lambda$; that is, the Young diagram of $\lambda'$ is obtained from that of $\lambda$ by reflecting across the main diagonal.
    
  A \emph{semistandard Young tableau} (SSYT) of shape $\lambda$, is a filling of $\lambda$ with positive integers so that each box is filled with exactly one positive integer, the entries along each row are weakly increasing from the left, and along each column strictly increasing from the top. Let $\mathrm{SSYT}(\lambda)$ denote the set of SSTYs of shape $\lambda$. The \emph{Schur function} $s_\lambda$ is defined to be the generating function of SSYTs:
  \begin{align}\label{E:stom}
  	s_{\lambda}=\sum_{T\in \mathrm{SSYT}(\lambda)}x^T=\sum_{T\in \mathrm{SSYT}(\lambda)}x_{c(T_1)}x_{c(T_2)}\cdots x_{c(T_{|\lambda|})}
  \end{align}
  where the boxes of $T$ are labelled by $T_1, T_2,\ldots, T_{|\lambda|}$, and $c(T_i)$ is the content of box $T_i$.  For a weak composition $\gamma$ of $|\lambda|$, 
  let $\mathrm{SSYT}(\lambda,\gamma)$ be the set of $T\in \mathrm{SSYT}(\lambda)$ such that  $\vert\{j\,\vert\,c(T_j)=i\}\vert =\gamma_i$ for all $i$, and let $K_{\lambda\mu}=|\mathrm{SSYT}(\lambda,\mu)|$ be the {\em Kostka number}. Then
  \begin{align}\label{E:stom2}
  	s_{\lambda}=\sum_{\mu\le\lambda}K_{\lambda\mu} m_{\mu}=m_{\lambda}+\sum_{\mu<\lambda}K_{\lambda\mu} m_{\mu}.
   \end{align}
  We now turn to a generalization of monomial symmetric functions and the Schur functions. The HL functions $P_{\lambda}(t)=P_{\lambda}(X;t)$ are defined as the unique $\mathbb{Z}[t]$-basis for the ring of symmetric functions satisfying the lower triangularity and the orthogonality: 
  \begin{align}\label{E:Pm11}
  	P_{\lambda}(t)=m_{\lambda}&+\sum_{\mu<\lambda}c_{\lambda\mu}(t)m_{\mu},
  \end{align}
  for some $c_{\lambda\mu}(t)\in \mathbb{Z}[t]$,
  and $\langle P_{\lambda},P_{\mu}\rangle_t=0$ if $\lambda\ne \mu$. The inner product $\langle\,\,,\,\,\rangle_t$ on the symmetric functions with values in $\mathbb{Q}(t)$ is defined by requiring that the bases of power-sum symmetric functions $\{p_{\lambda}\}$ be orthogonal:
  \begin{align}\label{E:inner11}
  	\langle p_{\lambda}, p_{\mu}\rangle_t=z_{\lambda}\,\delta_{\lambda\mu}\prod_{i\ge 1}\frac{1}{1-t^{\lambda_i}},
  \end{align}	
  where $z_{\lambda}=\prod_{i\ge 1}i^{m_i}m_i!$ and $m_i$ counts the multiplicity of part $i$ in the partition $\lambda$. For $t=1$, the inner product degenerates, and one defines $P_{\lambda}(1)$ by continuity in $t$, and that limit is $m_{\lambda}$. For $t=0$, we have $P_{\lambda}(0)=s_{\lambda}$ by (\ref{E:stom2}) and $\langle s_{\lambda}, s_{\mu}\rangle_0=\delta_{\lambda\mu}$. In other words, $c_{\lambda\mu}(1)=\delta_{\lambda\mu}$ and $c_{\lambda\mu}(0)=K_{\lambda\mu}$.
 
  Let us recall a tableau formula for HL functions $P_{\lambda}(t)$ in \cite[Chapter III, Equation $(5.11)'$]{Mac95}, as an extension of (\ref{E:stom2}). 
  
  Given two partitions $\lambda =   (\lambda_1, \ldots, \lambda_{\ell(\lambda)}) \vdash n$ and $\mu = (\mu_1, \ldots, \mu_{\ell(\mu)}) \vdash m$, we say that $\mu$ is \emph{contained} in $\lambda$, denoted by $\mu \subseteq \lambda$, if $\ell (\mu) \leq \ell(\lambda)$ and $\mu _i \leq \lambda _i$ for all $1 \leq i \leq \ell(\mu)$. The \emph{skew diagram} $\lambda / \mu$ of \emph{size} $(n-m) =| \lambda / \mu |$ is the array of boxes contained in $\lambda$ but not in $\mu$ when the array of boxes of $\mu$ is positioned in the top-left corner of the array of boxes of $\lambda$. A {\em horizontal strip} is a skew diagram with at most one square in each column. For a horizontal strip $\theta=\lambda/\mu$, we define
  \begin{align}\label{E:psi1}
  	\psi_{\lambda/\mu}(t)=\prod_{j\in J}(1-t^{m_j(\mu)}),
  \end{align}
  where $J$ is the set of integers $j\ge 1$ such that $\theta_j'<\theta_{j+1}'$. Since $\theta$ is a horizontal strip, the condition $\theta_j'<\theta_{j+1}'$ occurs only when $\theta_j'=0$ and $\theta_{j+1}'=1$.
  
Note that a semistandard Young tableau $T\in \mathrm{SSYT}(\lambda)$ is determined by a sequence of partitions $(\lambda^0=\emptyset,\lambda^1,\ldots,\lambda^r=\lambda)$ such that $\lambda^i/\lambda^{i-1}$ is a horizontal strip and each cell in $\lambda^i/\lambda^{i-1}$ is filled with the $i$th smallest entry in $T$. Then the HL function
  \begin{align}\label{E:SSYT_P}
  	P_{\lambda}(t)&=\sum_{T\in\mathrm{SSYT}(\lambda)}\psi_T(t)x_{c(T_1)}x_{c(T_2)}\cdots x_{c(T_{|\lambda|})}, \mbox{ where }\,
  	\psi_{T}(t)=\prod_{1\le i\le r}\psi_{\lambda^i/\lambda^{i-1}}(t)
  \end{align}
 and the boxes of $T$ are labelled by $T_1,T_2,\ldots,T_{|\lambda|}$. Note that (\ref{E:SSYT_P}) is equivalent to 
  \begin{align}\label{E:clam1}
  	c_{\lambda\mu}(t)=[m_{\mu}]P_{\lambda}=\sum_{T\in \mathrm{SSYT}(\lambda,\mu)}\psi_{T}(t)
  \end{align}
where $[b]f$ denotes the coefficient of $b$ in the expansion of $f$ with respect to the basis $\{b\}$ or the monomial $b$.
  
  \begin{example}
  	For $\lambda=(2,1)$, we derive a combinatorial formula for $P_{21}(t)$ by (\ref{E:SSYT_P}). The semistandard Young tableau 
  	\begin{align*}
  		T=\begin{ytableau}
  			$1$ &  $3$ \\
  			$2$ 
  		\end{ytableau}\,\,
  	\end{align*}
    is determined by the sequence $(\lambda^0=\emptyset,\lambda^1,\lambda^2,\lambda)$ where
  	$\lambda^1=(1)$ and $\lambda^2=(1,1)$. Therefore  $\psi_{T}(t)=\psi_{\lambda^1}(t)\psi_{\lambda^2/\lambda^1}(t)\psi_{\lambda/\lambda^2}(t)=1\cdot 1\cdot (1-t^2)=1-t^2$.
  	Similarly, the corresponding $\psi_{T}(t)$ for the semistandard  Young tableaux 
  	\begin{align*}
  		\begin{ytableau}
  			$1$ &  $1$ \\
  			$2$ 
  		\end{ytableau}\,\,,\,\, 
  		\begin{ytableau}
  			$1$ &  $2$ \\
  			$2$ 
  		\end{ytableau}\,\,,\,\,
  		\begin{ytableau}
  			$1$ &  $2$ \\
  			$3$ 
  		\end{ytableau}
  	\end{align*}
  	are $\psi_{T}(t)=1$, $\psi_{T}(t)=1$ and $\psi_{T}(t)=1-t$ from left to right. By (\ref{E:SSYT_P}),
  	$P_{21}(t)=m_{21}+(2-t-t^2)m_{111}$.
  \end{example} 
   Returning to Schur functions, Lascoux and Sch{\"u}tzenberger \cite{LS} proved that
   \begin{align}\label{E:sP}
   	s_{\lambda}=\sum_{\mu\le \lambda}K_{\lambda\mu}(t)P_{\mu}(t)
   \end{align}
   where $K_{\lambda\mu}(t)$ is called {\em Kostka polynomial} defined as the generating function of SSYTs counted by the $\mathrm{charge}$ statistic; see \cite[Chapter III.6, Equation (6.5) (i)]{Mac95} . In particular $K_{\lambda\mu}(1)=K_{\lambda\mu}$. 
   \begin{example}
   	$P_{21}(t)=m_{21}+(2-t-t^2)m_{111}$ and $s_{21}=P_{21}(t)+(t+t^2)P_{111}(t)$. 
   \end{example}

  \subsection{Symmetric functions in noncommuting variables} 
  We now introduce the graded Hopf algebra of symmetric functions in noncommuting variables, NCSym,
	\begin{align*}
		\mathrm{NCSym}=\mathrm{NCSym}^0 \oplus \mathrm{NCSym}^1 \oplus \cdots \subseteq \mathbb{Q}[[\bx]],
	\end{align*}
   where $\mathbb{Q}[[\bx]]=\mathbb{Q}[[\bx_1,\bx_2,\ldots]]$ is the algebra of formal power series in noncommuting variables $\bx=(\bx_1,\bx_2,\ldots)$, $\mathrm{NCSym}^0=\mathrm{span}\{1\}$, and the $n$th graded piece for $n\ge 1$ has the following bases given by Rosas and Sagan \cite{RS} and Aliniaeifard, Li and van Willigenburg \cite{alw:22}.
     \begin{alignat*}{3}
   	\mathrm{NCSym}^n&=\mathrm{span}\{\bm_{\pi}:\pi\vdash [n]\}&&=\mathrm{span}\{\bp_{\pi}:\pi\vdash [n]\}\\
   	&=\mathrm{span}\{\be_{\pi}:\pi\vdash [n]\}&&=\mathrm{span}\{\bh_{\pi}:\pi\vdash [n]\} =\mathrm{span}\{s_{\pi}:\pi\vdash [n]\}. &&
   \end{alignat*}
   For a set partition $\pi=\pi_1/ \cdots /\pi_{\ell(\lambda(\pi))}\vdash [n]$, we write $a\sim_{\pi} b$ if $a$ and $b$ are in the same block of $\pi$. The \emph{monomial symmetric function} $\bm_\pi$ in NCSym is defined by 
   $$ \bm_\pi=\sum_{(i_1, \dots, i_k)} \bx_{i_1}\cdots \bx_{i_{n}}\,$$
   where the sum is taken over all $k$-tuples $(i_1, \dots, i_n)$ such that $i_a=i_b$ if and only if $a \sim _{\pi} b$. The \emph{elementary symmetric function} $\be_\pi$ in NCSym is 
   $$ \be_\pi=\sum_{(i_1, \dots, i_k)} \bx_{i_1}\cdots \bx_{i_{n}}$$
   that is summed over all $k$-tuples $(i_1, \dots, i_k)$ with $i_a\ne i_b$ if $a\sim_{\pi}b$. The \emph{power-sum symmetric function} $\bp_{\pi}$ in NCSym is given by
   $$ \bp_\pi=\sum_{(i_1, \dots, i_k)} \bx_{i_1}\cdots \bx_{i_{k}}\,$$
   where the sum is ranged over all $k$-tuples $(i_1, \dots, i_k)$ with $i_a=i_b$ if $a\sim_{\pi} b$. The \emph{complete homogeneous  symmetric function} $\bh_\pi$ in NCSym is defined by 
   \begin{align}\label{E:htom}
   	 \bh_\pi=\sum_{\sigma\vdash [n]} (\sigma\wedge\pi)!\, \bm_\sigma.
   	\end{align} 
   Let $\mathfrak{S}_n$ be the set of permutations of $[n]$. Given $\delta\in \mathfrak{S}_n$ and a monomial  of degree $n$ in noncommuting variables, we define
   \begin{align}\label{E:delta}
   	\delta\circ(\bx_{i_1}\bx_{i_2}\cdots\bx_{i_n})= \bx_{i_{\delta^{-1}(1)}}\bx_{i_{\delta^{-1}(2)}}\cdots\bx_{i_{\delta^{-1}(n)}}
   \end{align}
   and extend it linearly \cite[Equation (5)]{RS}. This also yields that 
   \begin{align}\label{E:mbph}
   	\delta\circ \psi_{\pi}=\psi_{\delta\pi}
   \end{align}
   for $\psi\in \{\bm, \bp, \be,\bh\}$; see \cite[Section 2]{RS} and \cite[Equation (2.4)]{alw:22}. 
   
   Before we introduce the Schur function in NCSym, we need to present some properties of $\bh_{\pi}$ and noncommutative analogue of Leibniz' determinantal formula.
   Much like the $h$-basis in Sym (compare with (\ref{E:prod})), the $\bh$-functions multiply together as follows:
   \begin{lemma}\label{L:hmul}\cite[Corollary 2.41]{ber:10}\
   	For set partitions $\pi$ and $\sigma$ we have that
   	$$\bh_\pi \bh_\sigma = \bh_{\pi \mid \sigma}.$$
   \end{lemma}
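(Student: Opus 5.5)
We will prove $\bh_\pi\bh_\sigma=\bh_{\pi\mid\sigma}$ by checking coefficients in the monomial basis. Write $n=|\pi|$ and $m=|\sigma|$. By (\ref{E:htom}), we have $\bh_\pi=\sum_{\tau\vdash[n]}(\tau\wedge\pi)!\,\bm_\tau$. In this formula a block of size $k$ in $\tau\wedge\pi$ contributes $k!$.

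The first step is to understand the product of two monomial functions. For $\tau\vdash[n]$ and $\rho\vdash[m]$, the standard product rule in NCSym reads
\begin{align*}
\bm_\tau\bm_\rho=\sum_{\nu}\bm_{\nu}.
\end{align*}
Here $\nu$ ranges over set partitions of $[n+m]$ whose restriction to $[n]$ is $\tau$ and whose restriction to $\{n+1,\dots,n+m\}$ is $\rho$ shifted by $n$. So $\nu$ is obtained from $\tau\mid\rho$ by merging some blocks of $\tau$ with some blocks of the shifted $\rho$, each block merged with at most one other. To prove this, take a word $\bx_{i_1}\cdots\bx_{i_{n+m}}$ and split it as a prefix of length $n$ times a suffix of length $m$. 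The equality pattern of the whole word restricts to $\tau$ on the prefix and to $\rho$ on the suffix. Conversely, every pair of words with these patterns concatenates to a word of exactly one pattern $\nu$ of this kind. This gives a bijection on monomials, so the rule holds.

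Consequently, for a fixed $\nu\vdash[n+m]$, the coefficient of $\bm_\nu$ in $\bh_\pi\bh_\sigma$ is a single term. Let $\tau=\nu|_{[n]}$ and let $\rho$ be $\nu$ restricted to $\{n+1,\dots,n+m\}$ and shifted down by $n$. The coefficient is then $(\tau\wedge\pi)!\,(\rho\wedge\sigma)!$.

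It remains to compare this with the coefficient $(\nu\wedge(\pi\mid\sigma))!$ given by (\ref{E:htom}). The key observation is that every block of $\pi\mid\sigma$ lies entirely inside $[n]$ or entirely inside $\{n+1,\dots,n+m\}$. Therefore
\begin{align*}
\nu\wedge(\pi\mid\sigma)=(\tau\wedge\pi)\mid(\rho\wedge\sigma),
\end{align*}
because intersections of blocks of $\nu$ with blocks of $\pi$ only see $\nu|_{[n]}$, and similarly on the second half. The factorial of a set partition is multiplicative over slash products, since its block sizes are just the union of the two multisets of block sizes. Hence the two coefficients agree for every $\nu$, and the identity follows.

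The only point needing care is the monomial product rule, in particular the claim that each $\nu$ arises from exactly one pair $(\tau,\rho)$. This holds because $\tau$ and $\rho$ are recovered from $\nu$ by restriction.
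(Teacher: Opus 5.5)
Your proof is correct. The prefix/suffix splitting of words establishes $\bm_\tau\bm_\rho=\sum_\nu\bm_\nu$, summed over those $\nu\vdash[n+m]$ whose restrictions are $\tau$ and the shift of $\rho$. Since each $\nu$ determines its own restrictions, the coefficient of $\bm_\nu$ in $\bh_\pi\bh_\sigma$ is the single product $(\tau\wedge\pi)!\,(\rho\wedge\sigma)!$. Every block of $\pi\mid\sigma$ lies in one half, so $\nu\wedge(\pi\mid\sigma)=(\tau\wedge\pi)\mid(\rho\wedge\sigma)$. Multiplicativity of the block-factorial then gives exactly the coefficient prescribed by (\ref{E:htom}).

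The paper does not prove this lemma; it quotes it from the literature, so there is no internal argument to compare against. Your route is self-contained and uses only the definition (\ref{E:htom}) in the monomial basis, as the paper states it. A common alternative goes through the power-sum expansion $\bh_\pi=\sum_{\sigma\le\pi}|\mu(\hat 0,\sigma)|\,\bp_\sigma$, where $\mu$ is the M\"obius function of the partition lattice. It then uses the easy identity $\bp_\pi\bp_\sigma=\bp_{\pi\mid\sigma}$ and the factorization of the interval below $\pi\mid\sigma$ as a product of the intervals below $\pi$ and below $\sigma$. That is slicker once those facts are available, but it needs more machinery than your direct coefficient comparison.
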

   Hence, given a composition $\alpha=(\alpha_1,\ldots,\alpha_{\ell(\alpha)})$ we have $\bh_{[\alpha]}=\bh_{[\alpha_1]}\cdots \bh_{[\alpha_{\ell(\alpha)}]}$. Moreover, the noncommutative analogue of Leibniz' determinantal formula for any matrix $A=(a_{ij}) _{1\leq i,j\leq n}$ with noncommuting entries $a_{ij}$ is defined to be
   \begin{equation*}
   	{\bf det} (A) = \sum _{\varepsilon \in \mathfrak{S}_n} \mathrm{sgn} (\varepsilon) a_{1\varepsilon (1)}\cdots a_{n\varepsilon (n)}
   \end{equation*}
   that takes the product of the entries from the top row to the bottom row, and $\mathrm{sgn} (\varepsilon)$ is the sign of the permutation $\varepsilon$. Finally, the first Schur function $s_{\pi}$ in NCSym due to Aliniaeifard, Li and van Willigenburg \cite{alw:22} is defined to be
  \begin{align*}
  	s_{\pi}=\delta_{\pi}\circ {\bf det}\left(\frac{1}{(\lambda_i-\mu_j-i+j)!}\bh_{[\lambda_i-\mu_j-i+j]}\right)_{1\leq i,j\leq \ell(\lambda)}
  \end{align*}
  where we set $h_{[0]}= h_\emptyset = 1$ and  $h_{[i]}=0$ for $i<0$. 
  \begin{example}\label{Example:1schur}
  	The Schur function $s_{12/3}$ in $\mathrm{NCSym}$ is 
  	\begin{align*}
  		s_{12/3} &  = {{\bf det} \begin{pmatrix} \frac{1}{2!} \bh_{[2]}& \frac{1}{3!} \bh_{[3]}\\
  			\frac{1}{0!} \bh_{[0]}& \frac{1}{1!} \bh_{[1]}
  	\end{pmatrix}} = {\bf det} \begin{pmatrix} \frac{1}{2!} \bh_{12}& \frac{1}{3!} \bh_{123}\\
  		\frac{1}{0!} \bh_\emptyset & \frac{1}{1!} \bh_{1}
  	\end{pmatrix}\\
  	&= \frac{1}{2!} \bh_{12} \frac{1}{1!} \bh_{1} - \frac{1}{3!} \bh_{123}\frac{1}{0!} \bh_\emptyset = {\frac{1}{2} \bh_{12 \slashp 1} - \frac{1}{6} \bh_{123}} = \frac{1}{2} \bh_{12/3} - \frac{1}{6} \bh_{123}.
  \end{align*}
 By (\ref{E:htom}), we obtain 
 \begin{align*}
 	s_{12/3}=\frac{2}{3}\bm_{12/3}+\frac{1}{6}\bm_{13/2}+\frac{1}{6}\bm_{23/1}+\frac{1}{3}\bm_{1/2/3}.
 \end{align*} 
  \end{example}
    Note that (\ref{E:mbph}) does not hold for the Schur basis; namely, $\delta\circ s_{\pi}\ne s_{\delta\pi}$, as proved in \cite[Corollary 4.8]{alw:22}. 
   \subsection{Lifted and non-lifted HL functions}\label{ss:lift}
   The {\em projection map} introduced by Rosas and Sagan \cite[Section 2]{RS} is a linear map $\rho: \mathbb{Q}[[\bx_1, \bx_2, \dots]]\rightarrow \mathbb{Q}[[x_1, x_2,\dots]]$ that lets the variables commute. The image of NCSym bases under the projection map are given as below. 
   \begin{lemma}\cite[Theorem 2.1]{RS}\cite[Lemma 4.4]{alw:22}\label{lem:rho} 
   	Let $\pi$ be a partition. Then,
   	\begin{alignat*}{3}
   		\rho(\bm_\pi)&=\pi^! m_{\lambda(\pi)}, && \quad \rho(\bh_\pi)=\pi! h_{\lambda(\pi)},\\
   		\rho(\be_\pi)&=\pi! e_{\lambda(\pi)}, && \quad \rho(\bp_\pi)=p_{\lambda(\pi)},\\
   		\rho(s_{\pi})&=s_{\lambda(\pi)}.
   	\end{alignat*} 
   \end{lemma}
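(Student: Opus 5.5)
We prove the five identities of Lemma \ref{lem:rho} in turn. For the monomial, power-sum and elementary functions we work directly from the defining sums. For $\bh_\pi$ we use the expansion (\ref{E:htom}), and for $s_\pi$ we use the determinantal definition together with Lemma \ref{L:hmul}. We use throughout that $\rho$ is a ring homomorphism, since it only lets the variables commute.

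\textbf{Monomial functions.} Write $\lambda=\lambda(\pi)$ with multiplicities $m_1,\ldots,m_k$. Each word $\bx_{i_1}\cdots\bx_{i_n}$ in $\bm_\pi$ assigns a distinct variable index to each block of $\pi$. Its image under $\rho$ is the commutative monomial $\prod_{B\in\pi} x_{j_B}^{|B|}$, where the indices $j_B$ are distinct.

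Conversely, fix a commutative monomial $x_{a_1}^{\lambda_1}\cdots x_{a_\ell}^{\lambda_\ell}$ with distinct $a$'s. Its preimages in $\bm_\pi$ are the bijections from the blocks of $\pi$ to the indices $a_1,\ldots,a_\ell$ that preserve sizes, and distinct bijections give distinct words. There are exactly $m_1!\cdots m_k!=\pi^!$ of them, so $\rho(\bm_\pi)=\pi^! m_{\lambda}$.

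\textbf{Power sums.} The words in $\bp_\pi$ are exactly those that are constant on each block, with no other restriction. This is an unrestricted choice of one index per block, so $\bp_\pi$ factors after commutation: $\rho(\bp_\pi)=\prod_{B}\sum_j x_j^{|B|}=p_\lambda$.

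\textbf{Elementary functions.} Each block $B$ must receive pairwise distinct indices, and this condition is independent across blocks. After commutation, the contribution of one block is the sum over injective maps $B\to\mathbb{N}$, which equals $|B|!\,e_{|B|}$ because each set of $|B|$ indices is listed in $|B|!$ orders. Taking the product over blocks gives $\rho(\be_\pi)=\pi!\,e_\lambda$.

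\textbf{Complete homogeneous functions.} This case is the one needing care. One route applies $\rho$ to (\ref{E:htom}) and compares with the known expansion of $h_\lambda$ in monomials, but that comparison is awkward. We instead describe $\bh_\pi$ directly as a sum over words. Grouping the words of $\bx_{i_1}\cdots\bx_{i_n}$ by their kernel set partition $\sigma$ shows that $\bh_\pi$ is the sum of all words, each weighted by $(\sigma\wedge\pi)!$. Here $(\sigma\wedge\pi)!$ is the product, over blocks $B$ of $\pi$, of the factorials of the multiplicities of the letters occurring within $B$.

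With this description the weight factorizes over the blocks of $\pi$. For a single block $B$ of size $r$, the weighted sum over words on $B$, after commutation, is $\sum_{\text{multisets}} \binom{r}{\mathbf{c}}\prod_j c_j! \,x^{\mathbf{c}}$, where $\mathbf{c}=(c_j)$ is the multiplicity vector of the multiset. The multinomial coefficient counts the words with that content. Since $\binom{r}{\mathbf c}\prod_j c_j!=r!$, this sum is $r!\,h_r$. Multiplying over the blocks yields $\rho(\bh_\pi)=\pi!\,h_\lambda$.

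\textbf{Schur functions.} The action $\delta\circ$ only permutes positions in each word, so it commutes with $\rho$ in the sense that $\rho(\delta\circ f)=\rho(f)$. The entries of the determinant are the elements $\tfrac{1}{k!}\bh_{[k]}$, and their products are evaluated through Lemma \ref{L:hmul} as $\bh$ of a slash product. By the previous case, $\rho\bigl(\tfrac{1}{k!}\bh_{[k]}\bigr)=h_k$.

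Because $\rho$ is a ring homomorphism, applying it term by term gives
\begin{align*}
\rho(s_\pi)=\det\bigl(h_{\lambda_i-i+j}\bigr)_{1\le i,j\le \ell(\lambda)},
\end{align*}
which is $s_\lambda$ by the classical Jacobi–Trudi identity. The $\mu_j$ appearing in the definition are $0$ here.

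The main obstacle is the $\bh$ case: one must check that (\ref{E:htom}) agrees with the word-level weight description, so that the computation factorizes over blocks.
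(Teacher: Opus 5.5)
Your proof is correct: all five computations check out, including the identification of $(\sigma\wedge\pi)!$ with the product over blocks $B$ of $\pi$ of the factorials of the letter multiplicities in $B$ (the blocks of $\sigma\wedge\pi$ are the nonempty sets $S\cap B$), which is what makes the $\bh$ case factor as $\prod_B |B|!\,h_{|B|}$. The paper gives no argument of its own and simply cites Rosas--Sagan and Aliniaeifard--Li--van Willigenburg; your approach is essentially the standard argument from those sources, namely direct word counting for $\bm,\bp,\be,\bh$, then applying the ring homomorphism $\rho$ (which is invariant under the $\delta\circ$ action) to the noncommutative determinant and invoking Jacobi--Trudi.
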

   Meanwhile, Rosas and Sagan defined the {\em lifting map} $\tilde{\rho}: \mathrm{Sym}\rightarrow \mathrm{NCSym}$ by 
   \begin{align}\label{E:liftm}
   	\tilde{\rho}(m_\lambda)=\frac{\lambda !}{|\lambda|!}\sum_{\lambda(\pi)=\lambda} \bm_\pi
   \end{align}
   and extended it linearly \cite[Equation (9)]{RS}. The lifting map $\tilde{\rho}$ is actually a right inverse of $\rho$; namely, $\rho\tilde{\rho}=\varepsilon$ is the identity map in Sym. \cite[Proposition 4.1]{RS}. 
   
   We introduce the {\em lifted HL functions} $\bP_{\lambda}(t)=\bP_{\lambda}(\bx;t)$ indexed by partitions that are obtained by applying the lifting map to the scaled HL functions $|\lambda|!P_{\lambda}(t)$. 
   \begin{align*}
   	\bP_{\lambda}(t)=|\lambda|!\,\tilde{\rho}(P_{\lambda}(t))
   	=\sum_{\mu\le \lambda}\mu!\,c_{\lambda\mu}(t) 
   	\sum_{\lambda(\sigma)=\mu}\bm_{\sigma}.
   \end{align*}
The {\em lifted Schur function} is defined by
\begin{align}\label{E:liftS}
	\bS_{\lambda}=\bP_{\lambda}(0)=|\lambda|!\,\tilde{\rho}(s_{\lambda})
	=\sum_{\mu} \mu! K_{\lambda\mu}\sum_{\lambda(\sigma)=\mu} \bm_\sigma,
\end{align}
which are introduced by Rosas and Sagan \cite{RS}. However, these polynomials are indexed by partitions, rather than set partitions; hence they do not form a basis for NCSym.

  Finally, we introduce the {\em non-lifted HL functions} $\bPP_{\lambda}(t)$ for partitions $\lambda$ in NCSym, beginning by specifying a reading order. The {\em reading order} is the total ordering on the boxes of the Young diagram given by reading them row by row, from top to bottom, and from left to right within each row. Then 
   \begin{align}\label{E:tabP2}
   \bPP_{\lambda}(t)=\bPP_{\lambda}(\bx;t)=\sum_{T\in \mathrm{SSYT}(\lambda)}\psi_{T}(t)
   	\bx_{c(T_{1})}
   	\bx_{c(T_{2})}\cdots \bx_{c(T_{n})},
   \end{align} 
   where the boxes of $T$ are labelled $T_1,T_2,\ldots,T_n$ by the reading order. Define $\bs_{\lambda}=\bPP_{\lambda}(0)$ to be the {\em non-lifted Schur function} in NCSym.    
 
   \section{Hall-Littlewood functions in NCSym}\label{S:3}
This section is devoted to defining the HL functions $\bP_{\pi}(t)=\bP_\pi(\bx;t)$ in noncommuting variables $\bx=(\bx_1,\bx_2,\ldots)$ with coefficients in the ring $\mathbb{Q}[t]$ for set partitions $\pi$, and to proving some nice properties of these polynomials. 

 \begin{definition}[HL functions in NCSym]\label{def:Ppi}		
	Let $\pi\vdash [n]$ be a set partition with $\lambda(\pi)=\lambda$. The HL function in noncommuting variables $\bx=(\bx_1,\bx_2,\ldots)$ is defined to be
	\begin{align}\label{E:pi4}
		\bP_{\pi}(t)=\bP_{\pi}(\bx,t)=\sum_{\mu\leq \lambda}\frac{c_{\lambda\mu}(t)}{\mu^!}
		\sum_{g\in G_{\pi}}g\circ \bm_{\delta_\pi [\mu]},
	\end{align}
	where $c_{\lambda\mu}(t)=[m_{\mu}]P_{\lambda}(t)$ and $G_\pi=\{\delta\in \mathfrak{S}_n\,\vert\, \delta\pi=\pi \}$ is the stabilizer group of $\pi$. The Schur function and Schur $P$-function  in noncommuting variables are defined by $\bs_{\pi}=\bP_{\pi}(0)$ and $\bP_{\pi}(-1)$, respectively.
\end{definition}
\begin{example}
	Let $\pi=12/3$, then $G_{\pi}=\mathfrak{S}_2$, 
	\begin{align*}
		\bP_{12/3}(t)=2\bm_{12/3}+\frac{2-t-t^2}{3}\bm_{1/2/3}\quad \mbox{ and }\quad\bs_{12/3}=2\bm_{12/3}+\frac{2}{3}\bm_{1/2/3}.
	\end{align*}
  Note that $\bs_{12/3}\ne s_{12/3}$ in Example \ref{Example:1schur}.
\end{example}
Our first main result shows that these polynomials form a $\mathbb{Q}[t]$-basis of NCSym.
 \begin{theorem}\label{T:basis} The set $\mathcal{B}_n=\{{\bP}_{\pi}(\bx;t): \pi \vDash [n]\}$ is a $\mathbb{Q}[t]$-basis of symmetric functions of homogeneous degree $n$ in noncommuting variables $\bx=(\bx_1,\bx_2,\ldots)$, and it is invariant under any permutation of $[n]$. That is, $w\mathcal{B}_n=\mathcal{B}_n$ for any $w\in  \mathfrak{S}_n$.
\end{theorem}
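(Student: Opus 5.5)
We prove both claims by expanding $\bP_{\pi}(t)$ in the monomial basis $\{\bm_\sigma:\sigma\vdash[n]\}$ and reading off a triangularity. By (\ref{E:mbph}), $g\circ\bm_{\delta_\pi[\mu]}=\bm_{g\delta_\pi[\mu]}$. Hence (\ref{E:pi4}) becomes
\begin{align*}
\bP_\pi(t)=\sum_{\mu\le\lambda}\frac{c_{\lambda\mu}(t)}{\mu^!}\sum_{g\in G_\pi}\bm_{g\delta_\pi[\mu]},
\end{align*}
so each $\bP_\pi(t)$ is a $\mathbb{Q}[t]$-combination of the $\bm_\sigma$ and lies in $\mathrm{NCSym}^n$.

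\emph{Step 1: the leading term.} We isolate the term $\mu=\lambda$, where $c_{\lambda\lambda}(t)=1$ by (\ref{E:Pm11}). Since $\delta_\pi[\lambda]=\pi$ and $g\pi=\pi$ for $g\in G_\pi$, this term equals $\frac{|G_\pi|}{\lambda^!}\bm_\pi$. The stabilizer has order $|G_\pi|=\lambda!\,\lambda^!$: permute within blocks, and permute blocks of equal size. So the leading coefficient is $\lambda!=\pi!$, which is a nonzero rational number independent of $t$.

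\emph{Step 2: the remaining terms.} For $\mu<\lambda$, every set partition $g\delta_\pi[\mu]$ has type $\mu$, and $\mu$ is strictly dominated by $\lambda$. We then order set partitions of $[n]$ by any linear extension of dominance on their types, refined arbitrarily within each type. In this order
\[
\bP_\pi(t)=\pi!\,\bm_\pi+\sum_{\lambda(\sigma)<\lambda(\pi)}a_{\pi\sigma}(t)\,\bm_\sigma,
\]
so the transition matrix from $\{\bm_\sigma\}$ to $\mathcal{B}_n$ is block-triangular with respect to types. The diagonal blocks, for fixed $\lambda$, are the scalar matrices $\lambda!\cdot I$. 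Their determinant is a nonzero rational number, hence a unit in $\mathbb{Q}[t]$. Since $\{\bm_\sigma\}$ is a basis of $\mathrm{NCSym}^n$ (over $\mathbb{Q}$, and therefore over $\mathbb{Q}[t]$ after extending scalars), $\mathcal{B}_n$ is a $\mathbb{Q}[t]$-basis. The thing to be careful about here is the bookkeeping that within a fixed type no other $\bP_\pi$ contributes off the diagonal. This holds because the only type-$\lambda$ monomial appearing in $\bP_\pi$ is $\bm_\pi$.

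\emph{Step 3: permutation invariance.} This is the main obstacle, because $\delta_{w\pi}$ need not equal $w\delta_\pi$. It suffices to show $w\circ\bP_\pi(t)=\bP_{w\pi}(t)$ for all $w\in\mathfrak{S}_n$; then $w\mathcal{B}_n\subseteq\mathcal{B}_n$, and equality follows since $w$ acts bijectively on set partitions. We have $G_{w\pi}=wG_\pi w^{-1}$. Applying $w$ to the displayed expansion gives
\[
w\circ\bP_\pi=\sum_\mu\frac{c_{\lambda\mu}}{\mu^!}\sum_{h\in G_{w\pi}}\bm_{h\,w\delta_\pi[\mu]}.
\]
This would match $\bP_{w\pi}$ if the inner sum $\sum_{g\in G_\pi}\bm_{g\delta_\pi[\mu]}$ did not depend on the choice of permutation $\delta$ with $\delta[\lambda]=\pi$. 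To prove that, let $\delta,\delta'$ both satisfy $\delta[\lambda]=\delta'[\lambda]=\pi$. Then $\delta'=\delta u$ with $u\in G_{[\lambda]}$, and $\delta u\delta^{-1}\in G_\pi$. Hence
\[
\{g\delta'\}_{g\in G_\pi}=\{g\delta u\delta^{-1}\delta\}_{g\in G_\pi}=\{g'\delta\}_{g'\in G_\pi}
\]
as multisets, and the inner sum is unchanged. Applying this with $\delta=w\delta_\pi$ and $\delta'=\delta_{w\pi}$, both of which send $[\lambda]$ to $w\pi$, gives $w\circ\bP_\pi=\bP_{w\pi}$. This completes the proof.
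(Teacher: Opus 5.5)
Your proof is correct and follows essentially the same route as the paper. For the basis claim you use the same triangularity of the $\bm$-expansion, with leading coefficient $\lambda!$ coming from $|G_\pi|=\lambda!\lambda^!$ and all other terms of strictly dominated type. For invariance you use the same key step as the paper's lemma for $\sigma=w\pi$: $\delta_{w\pi}$ and $w\delta_\pi$ differ by an element of the stabilizer, and your $w\delta_\pi u(w\delta_\pi)^{-1}$ is the paper's $wg^\dagger w^{-1}$. The only difference is that you read $G_{w\pi}=wG_\pi w^{-1}$ directly off the definition of a stabilizer, where the paper goes through the decomposition $G_\pi=R_\pi\rtimes H_\pi$.
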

We usually omit the variables $\bx$ and simply write $\bP_{\pi}(t)$. Before proving Theorem \ref{T:basis}, we discuss the stabilizer group $G_{\pi}$ and establish two auxiliary lemmas. 

For any subset $B\subseteq [n]$, let $\mathfrak{S}_{B}=\{\delta\in \mathfrak{S}_n \,\vert\, \delta B=B\}$ be the stabilizer subgroup of $B$ in $\mathfrak{S}_n$.
Given a set partition $\pi=\pi_1/\cdots/\pi_{\ell(\pi)}\vdash [n]$, let $\lambda=\lambda(\pi)=\langle 1^{m_1}, 2^{m_2},\dots, k^{m_k}\rangle$. Then for each $m_j>1$, let $\pi_{j_1},\dots, \pi_{j_{m_j}}$ be the blocks in $\pi$ of size $j$ where $\pi_{j_a}=\{x_{a,1}<\cdots<x_{a, j}\}$ for $1\le a\le m_j$.  The subgroup $H_j$ is defined by 
\begin{align}\label{Def:H}
H_j=\langle s^j_i\,|\, 1\le i<m_j\rangle
\end{align}
of $\mathfrak{S}_n$, where $s^j_i=(x_{i,1}\, x_{i+1,1})(x_{i,2}\,x_{i+1,2})\cdots (x_{i,j}\,x_{i+1,j})$ is a product of disjoint transpositions. Then we have 
\begin{align}\label{eq:G_pi_1}
	G_{\pi}&=R_{\pi}\rtimes H_{\pi},\quad \mbox{ where }\,R_{\pi}=\prod_{i=1}^{\ell(\pi)} \mathfrak{S}_{\pi_i}\,\mbox{ and }\,H_{\pi}=\prod_{m_j>1} H_j.
\end{align}
Since the group $H_j$ is isomorphic to $\mathfrak{S}_{m_j}$, the order of $G_\pi$ is equal to $\lambda! \lambda^{!}$. Moreover, $R_\pi$ stabilizes each block of $\pi$ and $H_\pi$ permutes the blocks of equal size, and
 \begin{equation}\label{eq:G_pi}
	G_\pi=\bigsqcup_{h\in H_\pi} hR_{\pi}=\bigsqcup_{h\in H_\pi} R_{\pi}h\,.
\end{equation}
 \begin{example}
	If  $\pi=1\,2\,3\,4/5\,6\,7\,8/ 9\,10\,11\,12$, then $\lambda(\pi)=\langle 4^3\rangle$ and by (\ref{Def:H}),
	\begin{align*}
	H_4=\langle(1\,5)(2\,6)(3\,7)(4\,8), (5\, 9)(6\,10)(7\,11)(8\,12)\rangle.
    \end{align*}
	Hence, by (\ref{eq:G_pi_1}), $G_{\pi}=\left(\mathfrak{S}_{\{1, 2,3,4\}}\times\mathfrak{S}_{\{ 5, 6, 7, 8\}}\times\mathfrak{S}_{\{9, 10, 11, 12\}}\right)\rtimes  H_4$.
\end{example}	
 \begin{lemma}\label{lem:R} Let $\pi$ be a set partition of $[n]$ and $w$ be a permutation of $[n]$. Then, 
	$$R_{w\pi}=wR_{\pi}w^{-1}\,.$$
\end{lemma}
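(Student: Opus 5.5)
The plan is to prove the two inclusions $wR_{\pi}w^{-1}\subseteq R_{w\pi}$ and $R_{w\pi}\subseteq wR_{\pi}w^{-1}$ directly from the definition $R_{\pi}=\prod_{i}\mathfrak{S}_{\pi_i}$. The key observation is that the blocks of $w\pi$ are exactly $w\pi_1,\dots,w\pi_{\ell(\pi)}$. First record an equivalent description: $R_\pi$ is the set of $\delta\in\mathfrak{S}_n$ with $\delta\pi_i=\pi_i$ for every block $\pi_i$, i.e., the set of permutations that fix every block setwise. This holds because the factors $\mathfrak{S}_{\pi_i}$, viewed inside $\mathfrak{S}_n$, act on disjoint supports. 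So a product of elements from them fixes each block. Conversely, a permutation fixing each block setwise restricts to a permutation of each block, and it is the product of those restrictions.

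For the first inclusion, take $\delta\in R_\pi$ and a block $w\pi_i$ of $w\pi$. Then
\begin{align*}
(w\delta w^{-1})(w\pi_i)=w\delta\pi_i=w\pi_i,
\end{align*}
so $w\delta w^{-1}$ fixes every block of $w\pi$, and hence lies in $R_{w\pi}$.

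For the reverse inclusion, apply the same argument with $w^{-1}$ in place of $w$ and $w\pi$ in place of $\pi$. This gives $w^{-1}R_{w\pi}w\subseteq R_{w^{-1}w\pi}=R_\pi$, and conjugating by $w$ yields $R_{w\pi}\subseteq wR_\pi w^{-1}$.

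Nothing here is a real obstacle. The only point needing care is the identification of $\prod_i\mathfrak{S}_{\pi_i}$ with the group of permutations fixing each block setwise. Each $\mathfrak{S}_{\pi_i}$ in the paper's notation is defined as the stabilizer $\{\delta\in\mathfrak{S}_n : \delta\pi_i=\pi_i\}$, which is larger than the permutations supported on $\pi_i$. The product in (\ref{eq:G_pi_1}) should therefore be read either as the internal direct product of the subgroups supported on each block, or as the intersection $\bigcap_i\mathfrak{S}_{\pi_i}$. Both readings give the group of permutations fixing every block setwise. I will state this interpretation explicitly, after which the two computations above complete the proof.
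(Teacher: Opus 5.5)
Your proposal is correct and is essentially the paper's argument. The paper also reads $R_\pi$ as the set of permutations fixing every block setwise and uses the same conjugation computation, but runs it as one chain of equivalences ($u\in R_{w\pi}$ iff $(w^{-1}uw)\pi_i=\pi_i$ for all $i$ iff $w^{-1}uw\in R_\pi$) instead of two inclusions with the second obtained by symmetry. Your remark that $\prod_i\mathfrak{S}_{\pi_i}$ must be read as the intersection of the block stabilizers (equivalently, the internal direct product of the subgroups supported on the blocks) is a fair clarification, consistent with the paper's implicit usage and with $|R_\pi|=\lambda!$.
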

\begin{proof}
By definition,  $u\in R_{w\pi}$ if and only if $u(w \pi_i)=w \pi_i$ for all $1\le i\le \ell(\pi)$, 
which is equivalent to $(w^{-1}uw) \pi_i=\pi_i$. Since  $(w^{-1}uw) \pi_i=\pi_i$ for all $i$ if and only if $w^{-1}uw\in R_\pi$, the proof is complete.
 \end{proof}
\begin{lemma}\label{lem:pi_to_sigma} Let $\pi$ and $\sigma$ be set partitions of $[n]$ with $\lambda(\pi)=\lambda(\sigma)$. If $w\pi=\sigma$ for some permutation $w$ of $[n]$. Then the following statements hold:
	\begin{enumerate}
		\item $H_{\sigma}=wH_\pi w^{-1}$.
		\item $\bP_{\sigma}(t)=w\circ\bP_{\pi}(t)$.
	\end{enumerate}
	In particular, $\bP_{\sigma}(t)=\delta_\sigma(\delta_\pi)^{-1}\bP_{\pi}(t)$.
\end{lemma}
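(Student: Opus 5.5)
The plan is to prove (1) at the level of groups and then use it to transport the defining sum (\ref{E:pi4}) from $\pi$ to $\sigma$.

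\emph{Part (1).} Since $\lambda(\pi)=\lambda(\sigma)$ and $w\pi=\sigma$, conjugation by $w$ carries $G_\pi$ onto $G_\sigma$: $\delta\pi=\pi$ if and only if $(w\delta w^{-1})\sigma=\sigma$. By Lemma \ref{lem:R}, $R_\sigma=wR_\pi w^{-1}$. The generators $s^j_i$ of $H_j$ are built by matching the $a$th smallest elements of two blocks of size $j$. Conjugating gives $ws^j_iw^{-1}=(w(x_{i,1})\,w(x_{i+1,1}))\cdots$, which swaps the blocks $w\pi_{j_i}$ and $w\pi_{j_{i+1}}$ of $\sigma$. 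However, $w$ need not preserve the order inside blocks, so this conjugate generator may pair elements differently from $\sigma$'s own $s^j_i$. It does, however, differ from the corresponding generator of $H_\sigma$ only by an element of $R_\sigma$.

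I expect this to be the main obstacle. A literal equality $H_\sigma=wH_\pi w^{-1}$ holds if $w$ is order-preserving on each block, for example when $w=\delta_\sigma\delta_\pi^{-1}$. For general $w$ I would first reduce to that case. Write $w=w_0r$ with $r\in G_\pi$ and $w_0=\delta_\sigma\delta_\pi^{-1}$; this is possible because $w_0^{-1}w$ stabilizes $\pi$. The element $r$ can be chosen to lie in $R_\pi H_\pi$ with its $R_\pi$-part correcting the within-block order. Then I would argue that conjugation by such an $r$ preserves $H_\pi$, or at least that the statement is needed only modulo $R$. Failing a clean equality, I would record $wH_\pi w^{-1}\subseteq G_\sigma=R_\sigma\rtimes H_\sigma$ together with $wR_\pi w^{-1}=R_\sigma$. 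Only this weaker fact is used below.

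\emph{Part (2).} Next I would rewrite the inner sum of (\ref{E:pi4}) in a form that does not depend on the choice of representative. The key identity to prove is
\[
\sum_{g\in G_\pi} g\circ\bm_{\delta_\pi[\mu]} = \sum_{g\in G_\pi}\bm_{g\delta_\pi[\mu]},
\]
which follows from (\ref{E:mbph}). The element $\delta_\pi$ maps $[\lambda]$ to $\pi$ block by block. Applying $w$ and using $(w\circ)\circ(g\circ)=(wg)\circ$, which comes directly from (\ref{E:delta}), gives
\[
w\circ\sum_{g\in G_\pi}\bm_{g\delta_\pi[\mu]}=\sum_{g'\in G_\sigma}\bm_{g'w\delta_\pi[\mu]},
\]
where $g'=wgw^{-1}$ ranges over $G_\sigma$.

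It then remains to show that $w\delta_\pi[\mu]$ and $\delta_\sigma[\mu]$ lie in the same $G_\sigma$-orbit. The element $u=w\delta_\pi\delta_\sigma^{-1}$ satisfies $u\sigma=w\delta_\pi[\lambda]=w\pi=\sigma$, so $u\in G_\sigma$. Hence $w\delta_\pi=u\delta_\sigma$ with $u\in G_\sigma$, and reindexing the sum over $G_\sigma$ absorbs $u$. This gives $w\circ\bP_\pi(t)=\bP_\sigma(t)$ term by term in $\mu$, since the coefficients $c_{\lambda\mu}(t)/\mu^!$ are the same for $\pi$ and $\sigma$.

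This orbit argument actually makes part (2) independent of part (1). The last claim follows by taking $w=\delta_\sigma\delta_\pi^{-1}$, which satisfies $w\pi=\delta_\sigma[\lambda]=\sigma$.
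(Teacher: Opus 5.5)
Your proof of (2) and of the final identity is correct, and it is essentially the paper's argument. The paper also transports the sum over $G_\pi$ to a sum over $G_\sigma=wG_\pi w^{-1}$. It absorbs the mismatch between $w\delta_\pi$ and $\delta_\sigma$ by writing $\delta_\sigma=wg^\dagger\delta_\pi$ with $g^\dagger\in G_\pi$; your $u=w\delta_\pi\delta_\sigma^{-1}$ is just $w(g^\dagger)^{-1}w^{-1}$. The one real difference is how $G_\sigma=wG_\pi w^{-1}$ is obtained. The paper derives it in (\ref{E:wG}) from (1), Lemma \ref{lem:R} and (\ref{eq:G_pi}). You read it off directly from $\delta\pi=\pi\iff(w\delta w^{-1})\sigma=\sigma$. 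Your route is preferable because it does not depend on (1).

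Your doubt about (1) is justified: for arbitrary $w$, (1) is false. The paper's proof of (1) makes exactly the step you flagged, asserting that $ws^j_iw^{-1}$ is a generator of $H_\sigma$. Take $\pi=\sigma=12/34$ and $w=(1\,2)$, so that $w\pi=\sigma$. Then $H_\sigma=\langle(1\,3)(2\,4)\rangle$, whereas $wH_\pi w^{-1}=\langle(2\,3)(1\,4)\rangle\neq H_\sigma$.

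The same example defeats your proposed repair. The element $r=(1\,2)\in R_\pi$ does not normalize $H_\pi$, so you cannot reduce a general $w$ to $\delta_\sigma\delta_\pi^{-1}$ by splitting off a factor in $G_\pi$. Part (1) therefore cannot be proved as stated. The correct version is the one you isolated: (1) holds when $w$ maps each block of $\pi$ onto its image in an order-preserving way. In particular it holds for $w=\delta_\sigma\delta_\pi^{-1}$, since $\delta_\pi$ and $\delta_\sigma$ are both order-preserving on blocks. For general $w$, only $wR_\pi w^{-1}=R_\sigma$ and $wH_\pi w^{-1}\subseteq G_\sigma$ survive.

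In the paper, (1) is used only to obtain $G_\sigma=wG_\pi w^{-1}$, which holds for every $w$. So part (2), and the results that rely on it (Theorems \ref{T:basis} and \ref{T:sumP1}), are unaffected. You should state (1) with the order-preserving hypothesis rather than attempt it in general.
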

\begin{remark}
It follows from Lemma \ref{lem:pi_to_sigma} (2) that
$\bs_{\pi}\ne s_{\pi}$, since $\bs_{\sigma}=w\bs_{\pi}$,  whereas $s_{\sigma}\ne w s_{\pi}$ as proved in \cite[Corollary 4.8]{alw:22} for all permutations $w$ of $[n]$. 
\end{remark}
\begin{proof}
	We begin by proving (1). 
	For each generator $s^j_i=(x_{i,1}\, x_{i+1,1})(x_{i,2}\,x_{i+1,2})\cdots (x_{i,j}\,x_{i+1,j})$ of $H_j\leq H_\pi$, 
	note that $ws^j_iw^{-1}=(w(x_{i,1})\, w(x_{i+1,1}))(w(x_{i,2})\,w(x_{i+1,2}))\cdots (w(x_{i,j})\,w(x_{i+1,j}))$ is a generator of $H_{\sigma}$. This yields $H_{\sigma}=wH_\pi w^{-1}$.
	
	By (1), Lemma \ref{lem:R} and (\ref{eq:G_pi}), we find that 
	\begin{align}\label{E:wG}
		wG_\pi &= w\bigsqcup_{h\in H_\pi} hR_{\pi}=w\bigsqcup_{h'\in H_{\sigma}} w^{-1}h'wR_{\pi}
		=\bigsqcup_{h'\in H_{\sigma}}h' R_{\sigma}w = G_{\sigma}w\,,
	\end{align}
	thus $G_\sigma= wG_{\pi}w^{-1}$. In particular, 
	$\delta_\pi G_{[\lambda]}=G_\pi\delta_\pi$. Since $\delta_\sigma [\lambda]=\sigma=w\pi=w\delta_\pi [\lambda]$, it follows that $\delta_\sigma=(w\delta_\pi)g^*=wg^\dagger\delta_\pi$ for some $g^*\in G_{[\lambda]}$ and $g^\dagger\in G_{\pi}$. Therefore, 
	\begin{align*}
		\bP_{\sigma}(t)&=\sum_{\mu\leq \lambda}\frac{c_{\lambda\mu}(t)}{\mu^!}
		\sum_{g\in G_{\sigma}}g\circ \bm_{\delta_\sigma [\mu]}	=\sum_{\mu\leq \lambda}\frac{c_{\lambda\mu}(t)}{\mu^!}
		\sum_{g\in wG_{\pi}w^{-1}}g\circ \bm_{wg^\dagger\delta_\pi [\mu]}	\\
		&= \sum_{\mu\leq \lambda}\frac{c_{\lambda\mu}(t)}{\mu^!}
		\sum_{g'\in G_{\pi}} (wg'g^\dagger)\circ \bm_{\delta_\pi [\mu]}= w\sum_{\mu\leq \lambda}\frac{c_{\lambda\mu}(t)}{\mu^!}
		\sum_{g''\in G_{\pi}} g''\circ \bm_{\delta_\pi [\mu]}\\
		&=w\circ\bP_{\pi}(t),
	\end{align*}
where the relation $\delta\circ \bm_{\pi}=\bm_{\delta\pi}$ is applied, as stated in (\ref{E:mbph}). 
Finally, since $\pi=\delta_\pi [\lambda]$, we write $[\lambda]=(\delta_{\pi})^{-1}\pi$ and consequently $\sigma=\delta_\sigma [\lambda]=\delta_\sigma (\delta_\pi)^{-1} \pi$. Take $w=\delta_\sigma (\delta_\pi)^{-1}$; then by (2), $\bP_{\sigma}(t)=\delta_\sigma(\delta_\pi)^{-1}\bP_{\pi}(t)$. 
\end{proof}
Now we are ready to prove Theorem \ref{T:basis}.

{\em Proof of Theorem \ref{T:basis}}. By Definition \ref{def:Ppi}, the only set partition $\sigma$ such that $[\bm_{\sigma}]\bP_{\pi}(t)\ne 0$ and $\lambda(\sigma)=\lambda(\pi)$ is $\pi$ itself. Since $c_{\lambda\lambda}(t)=[m_{\lambda}]P_{\lambda}=1$ and $|G_{\pi}|=\lambda!\lambda^!$, we have
\begin{align*}
	[\bm_{\pi}]\bP_{\pi}(t)=[\bm_{\pi}]\frac{c_{\lambda\lambda}(t)}{\lambda^!}\sum_{g\in G_{\pi}} g\circ \bm_{\delta_{\pi}[\lambda]}=\frac{c_{\lambda\lambda}(t)}{\lambda^!}|G_{\pi}|=\lambda!.
\end{align*} 
Moreover, $[\bm_{\sigma}]\bP_{\pi}(t)=0$ for set partitions $\sigma$ with $\lambda(\sigma)\not\le \lambda(\pi)$. This implies that the transition matrix from $\bP_{\pi}$ to $\bm_{\sigma}$ is a triangular matrix with nonzero entries on the main diagonal. Since the set $\{\bm_\sigma:\sigma\vdash [n]\}$ is a $\mathbb{Q}$-basis of NCSym, and therefore also a $\mathbb{Q}[t]$-basis of NCSym, 
it follows that $\mathcal{B}_n$ is also a $\mathbb{Q}[t]$-basis of NCSym. Note that the symmetric group $\mathfrak{S}_n$ acts transitively on $\mathcal{B}_n$ by Lemma~\ref{lem:pi_to_sigma} (2), and hence $w\mathcal{B}_n=\mathcal{B}_n$ for any $w\in \mathfrak{S}_n$.
\qed

Our second main result proves that the HL functions $P_{\pi}(t)$ refine the lifted HL function $\bP_{\lambda}(t)$.  
 \begin{theorem}\label{T:sumP1} 
 	For any partition $\lambda$, the lifted HL function is given by
 	\begin{align}\label{E:refineP}
 		\bP_{\lambda}(t)=\sum_{\lambda(\pi)=\lambda}\bP_{\pi}(t).
 	\end{align}
 	Consequently, when $t=0$, we have
 	\begin{align}\label{E:refineS}
 		\bS_{\lambda}=\sum_{\lambda(\pi)=\lambda}\bs_{\pi}
 		=\sum_{\mu\le\lambda}\sum_{\lambda(\pi)=\mu}K_{\lambda\mu}(t)\bP_{\pi}(t).
 	\end{align}
 Moreover, $\bS_{n}=\bh_{[n]}$ and $\bS_{1^n}=\be_{[n]}$. If $\lambda$ is a partition with distinct parts, then 
 	\begin{align}\label{E:refineQ}
 		\bP_{\lambda}(-1)=\sum_{\lambda(\pi)=\lambda}\bP_{\pi}(-1)&= \sum_{\mu}g_{\lambda\mu}\bS_{\mu},
 	\end{align}
 	where $g_{\lambda\mu}=[s_{\mu}]P_{\lambda}(-1)$ is the number of restricted marked tableaux of shape $\mu$ and weight $\lambda$ proved by Stembridge \cite[Page 131]{Stem:89}; see also \cite[Chapter III, (8.17)--(8.18)]{Mac95}. The polynomial $P_{\lambda}(-1)$ is called the Schur $P$-function. 
 \end{theorem}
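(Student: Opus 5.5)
The plan is to reduce everything to the single set partition $[\lambda]$ via Lemma~\ref{lem:pi_to_sigma}, and then average over all of $\mathfrak{S}_n$. Fix $\lambda\vdash n$. Every $\pi$ with $\lambda(\pi)=\lambda$ has the form $\pi=\delta_\pi[\lambda]$. The map $w\mapsto w[\lambda]$ from $\mathfrak{S}_n$ onto $\{\pi:\lambda(\pi)=\lambda\}$ has fibres equal to the left cosets $wG_{[\lambda]}$. Hence $\{\delta_\pi\}$ is a complete set of left coset representatives of $G_{[\lambda]}$ in $\mathfrak{S}_n$.

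By Lemma~\ref{lem:pi_to_sigma}(2), $\bP_\pi(t)=\delta_\pi\circ\bP_{[\lambda]}(t)$. Applying the same lemma with $\sigma=\pi=[\lambda]$ gives $g\circ\bP_{[\lambda]}(t)=\bP_{[\lambda]}(t)$ for every $g\in G_{[\lambda]}$. Therefore
\begin{align*}
\sum_{\lambda(\pi)=\lambda}\bP_\pi(t)=\frac{1}{|G_{[\lambda]}|}\sum_{w\in\mathfrak{S}_n}w\circ\bP_{[\lambda]}(t).
\end{align*}
Note that $\delta_{[\lambda]}$ is the identity, so Definition~\ref{def:Ppi} reads $\bP_{[\lambda]}(t)=\sum_{\mu\le\lambda}\frac{c_{\lambda\mu}(t)}{\mu^!}\sum_{g\in G_{[\lambda]}}\bm_{g[\mu]}$, using (\ref{E:mbph}). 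Substituting, the double sum over $w\in\mathfrak{S}_n$ and $g\in G_{[\lambda]}$ collapses to $|G_{[\lambda]}|\sum_{w\in\mathfrak{S}_n}\bm_{w[\mu]}$. By the same orbit–stabilizer reasoning, now applied to $[\mu]$, each $\sigma$ with $\lambda(\sigma)=\mu$ is hit $|G_{[\mu]}|=\mu!\mu^!$ times. This yields
\begin{align*}
\sum_{\lambda(\pi)=\lambda}\bP_\pi(t)=\sum_{\mu\le\lambda}\mu!\,c_{\lambda\mu}(t)\sum_{\lambda(\sigma)=\mu}\bm_\sigma ,
\end{align*}
which is exactly the expression defining $\bP_\lambda(t)$. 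This proves (\ref{E:refineP}). The main point requiring care is this bookkeeping: the $G_{[\lambda]}$-invariance of $\bP_{[\lambda]}(t)$ and the correct orbit sizes, which use $|G_\pi|=\lambda!\lambda^!$ from (\ref{eq:G_pi_1}).

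The remaining claims follow by linearity of $n!\,\tilde\rho$. Setting $t=0$ in (\ref{E:refineP}) gives the first equality of (\ref{E:refineS}). For the second equality, apply $n!\,\tilde\rho$ to (\ref{E:sP}) to get $\bS_\lambda=\sum_{\mu\le\lambda}K_{\lambda\mu}(t)\bP_\mu(t)$, and then expand each $\bP_\mu(t)$ by (\ref{E:refineP}).

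For $\lambda=(n)$, every $\mu\vdash n$ has $K_{(n)\mu}=1$, so $\bS_n=\sum_\sigma\sigma!\,\bm_\sigma$. On the other hand, (\ref{E:htom}) with the one-block partition gives $\sigma\wedge[n]=\sigma$, so $\bh_{[n]}=\sum_\sigma\sigma!\,\bm_\sigma$ as well. For $\lambda=1^n$, we have $K_{1^n\mu}=\delta_{\mu,1^n}$, so $\bS_{1^n}=\bm_{1/2/\cdots/n}$. This equals $\be_{[n]}$, since the defining condition of $\be_{[n]}$ forces all indices to be distinct.

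Finally, set $t=-1$ in (\ref{E:refineP}) to obtain the first equality of (\ref{E:refineQ}). For the second, apply $n!\,\tilde\rho$ to the Sym identity $P_\lambda(-1)=\sum_\mu g_{\lambda\mu}s_\mu$ (Stembridge) and use (\ref{E:liftS}).
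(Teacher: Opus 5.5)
Your proposal is correct and follows essentially the same route as the paper: you use orbit--stabilizer for the $\mathfrak{S}_n$-action on $\{\pi:\lambda(\pi)=\lambda\}$ together with Lemma~\ref{lem:pi_to_sigma}(2) to collapse the sum to $\sum_{\mu\le\lambda}\frac{c_{\lambda\mu}(t)}{\mu^!}\sum_{g\in\mathfrak{S}_n}g\circ\bm_{[\mu]}$, and then lift (\ref{E:sP}) and Stembridge's expansion to get the remaining identities. The only cosmetic difference is that you average over all of $\mathfrak{S}_n$ using the $G_{[\lambda]}$-invariance of $\bP_{[\lambda]}(t)$ and divide by $|G_{[\lambda]}|$, whereas the paper sums directly over coset representatives $w$ and merges the products $wg$ into $\mathfrak{S}_n$.
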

  A polynomial in NCSym is said to be {\em $\HL$-positive} (resp. $s$-positive), if it can be written as a linear combination of HL functions $\bP_{\pi}(t)$ (resp. $\bs_{\pi}$) in NCSym with coefficients in $\mathbb{N}[t]$ (resp. $\mathbb{N}$). Theorem \ref{T:sumP1} implies that the lifted HL function $\bP_{\lambda}(t)$ is $\HL$-positive. In particular, $\bS_{\lambda}$ and $\bP_{\lambda}(-1)$ are $s$-positive. We now prove Theorem \ref{T:sumP1}.
   \begin{proof}
   We note that, for a partition $\lambda \vdash n$, the set  $\{\pi\vdash [n] : \lambda(\pi)=\lambda\}$ is the orbit of $[\lambda]$ under the action of $\mathfrak{S}_n$, and the stabilizer group of $[\lambda]$ is $G_{[\lambda]}$. Therefore, by Lemma~\ref{lem:pi_to_sigma} (2), 
    \begin{align*} 
    	\sum_{\lambda(\pi)=\lambda} \bP_{\pi}(t) &= \sum_{w\in \mathfrak{S}_n/G_{[\lambda]}} w\circ\bP_{[\lambda]}(t)
    	= \sum_{w\in \mathfrak{S}_n/G_{[\lambda]}} \sum_{\mu\leq \lambda}\frac{c_{\lambda\mu}(t)}{\mu^!}\sum_{g\in G_{[\lambda]}}wg\circ \bm_{[\mu]}\\
    	&= \sum_{\mu\leq \lambda}\frac{c_{\lambda\mu}(t)}{\mu^!}\sum_{g\in \mathfrak{S}_n }g\circ \bm_{[\mu]}
    	= \sum_{\mu\leq \lambda}\frac{c_{\lambda\mu}(t)}{\mu^!} (\mu !\mu^!)\sum_{\lambda(\sigma)=\mu} \bm_\sigma\\ &=\sum_{\mu\leq\lambda} \mu! c_{\lambda\mu}(t)\sum_{\lambda(\sigma)=\mu} \bm_\sigma=\bP_{\lambda}(t)\,.
    \end{align*}
    Consequently, (\ref{E:refineS}) is established by setting $t=0$, where $c_{\lambda\mu}(0)=K_{\lambda\mu}$, and by applying the lifting map $\tilde{\rho}$ to both sides of (\ref{E:sP}). Similarly, (\ref{E:refineQ}) is obtained by setting $t=-1$, and then evaluating both sides of $P_{\lambda}(-1)=\sum_{\mu}g_{\lambda\mu}s_{\mu}$ (for any partition $\lambda$ with distinct parts) under the lifting map. 
    
    It remains to prove $\bS_{n}=\bh_{[n]}$ and $\bS_{1^n}=\be_{[n]}$. Since the Kostka number $K_{(n)\mu}=1$ for all partitions $\mu\vdash n$,  by definitions (\ref{E:liftS}) and (\ref{E:htom}) we have 
    \begin{align*}
    	\bS_{n}=\sum_{\mu\,\vdash n} \mu!\sum_{\lambda(\sigma)=\mu} \bm_\sigma
    	=\sum_{\sigma\,\vdash [n]}\sigma! \bm_{\sigma}=\bh_{[n]}.
    \end{align*}
    Finally, 
    $\bS_{1^n}=\bm_{1/2/\cdots/n}=\be_{[n]}$ by definition, which completes the proof.
     \end{proof}
    \begin{example}\label{E:22} 
        We give an example of (\ref{E:refineP}). 	
    	Let $\lambda=(2,2)$, then $[\lambda]=1\,2/3\,4$ and 
    	\begin{align*}
    		G_{12/34}&=( \mathfrak{S}_{\{1, 2\}}\times \mathfrak{S}_{\{3,4\}} )\rtimes \langle (1\,3)(2\,4) \rangle\\
    		&=( \mathfrak{S}_{\{1, 2\}}\times \mathfrak{S}_{\{3,4\}} )\cup (1\,3)(2\,4)( \mathfrak{S}_{\{1, 2\}}\times \mathfrak{S}_{\{3,4\}} )\\
    		&= \{\varepsilon, (1\,2), (3\,4), (1\,2)(3\,4), (1\,3)(2\,4), (1\,4\,2\,3), (1\,3\,2\,4), (1\,4)(2\,3)\} \,.
    	\end{align*}
    	There are three set partitions $\pi\vdash [4]$ such that $\lambda(\pi)=(2,2)$, listed as below. 
    	$$[\lambda]=1\,2/3\,4, \quad  \pi=1\,3/2\,4 \,\,   \mbox{ and }\,\,\sigma= 1\,4/2\,3\,,$$
    	and $\{\delta_{[\lambda]}=id, \delta_\pi=(2\,3), \delta_\sigma=(2\, 4\, 3)\}$ is a set of coset representatives of $ \mathfrak{S}_4/G_{[\lambda]}$. 
    	One can  also check that 
    	\begin{align*}
    		G_\pi&=( \mathfrak{S}_{\{1, 3\}}\times \mathfrak{S}_{\{2,4\}} )\rtimes \langle (1\,2)(3\,4) \rangle=\delta_\pi G_{[\lambda]}(\delta_\pi)^{-1} \,, \\
    		G_\sigma&=( \mathfrak{S}_{\{1, 4\}}\times \mathfrak{S}_{\{2,3\}} )\rtimes \langle (1\,2)(4\,3) \rangle=\delta_\sigma G_{[\lambda]}(\delta_\sigma)^{-1}\,,
    	\end{align*}
       as claimed in (\ref{E:wG}). By (\ref{E:pi4}), the corresponding HL functions are given by
    	\begin{align*}
    		\bP_{12/34}(t)&={1\over 2!} (8 \bm_{12/34})+{{1-t}\over 2!}( 4\bm_{12/3/4} +4 \bm_{34/1/2})\\
    		&\qquad + {{(t+2)(t-1)^2}\over 4!}(8 \bm_{1/2/3/4}),
    	\end{align*}
    	$\bP_{13/24}(t)=(2\,3)\bP_{12/34}(t)$ and $\bP_{14/23}(t)=(2\,4\,3)\bP_{12/34}(t)$. As a result, 
    	\begin{align*}
    		&\,\,\quad\bP_{12/34}(t)+\bP_{13/24}(t)+ \bP_{14/23}(t)\\
    		&=  4( \bm_{12/34}+\bm_{13/24} +\bm_{14/23})\\
    		&\qquad +\frac{1-t}{2}(\bm_{12/3/4}+\bm_{34/1/2}+\bm_{13/2/4}+\bm_{24/1/3}+\bm_{14/2/3}+\bm_{23/1/4})\\
    		&\qquad+\frac{(t+2)(t-1)^2}{3}\bm_{1/2/3/4}\\
    		&= \bP_{22}(t)\,.
    	\end{align*}
    \end{example}    
    Just as the HL function $P_{\lambda}(t)$ in Sym interpolates between the Schur function $s_{\lambda}=P_{\lambda}(0)$ and the monomial symmetric function $m_{\lambda}=P_{\lambda}(1)$, 
    the HL function $\bP_{\pi}(t)$ in NCSym also exhibits such nice property. 
    \begin{proposition}\label{prop:basic} Let $\pi$ be a set partition of $[n]$ and $\lambda=\lambda(\pi)$. Then, 
    	\begin{enumerate}
    		\item $\bP_{\pi}(1)= \lambda!\, \bm_\pi$.
    		\item $\rho(\bP_{\pi}(t))=\lambda ! \lambda^! P_\lambda(t)$. Consequently, $\rho(\bs_{\pi})=\lambda!\lambda^!s_{\lambda}$.
    	\end{enumerate}
    \end{proposition}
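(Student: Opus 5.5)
Both parts follow by evaluating Definition \ref{def:Ppi} directly. For (1) we specialize $t=1$, and for (2) we apply the projection $\rho$ termwise, using Lemma \ref{lem:rho} together with the fact that $\rho$ is unchanged by the action $\circ$ of permutations.

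For (1), we use that $c_{\lambda\mu}(1)=\delta_{\lambda\mu}$, since $P_\lambda(1)=m_\lambda$. Then only the term $\mu=\lambda$ survives in (\ref{E:pi4}), and
\[
\bP_\pi(1)=\frac{1}{\lambda^!}\sum_{g\in G_\pi} g\circ \bm_{\delta_\pi[\lambda]}=\frac{1}{\lambda^!}\sum_{g\in G_\pi}\bm_{g\pi}.
\]
Here we used $\delta_\pi[\lambda]=\pi$ and (\ref{E:mbph}). Every $g\in G_\pi$ fixes $\pi$, so the sum equals $|G_\pi|\,\bm_\pi=\lambda!\lambda^!\,\bm_\pi$, which gives $\lambda!\,\bm_\pi$. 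One point needs care: $c_{\lambda\mu}(t)$ is a polynomial, so plugging in $t=1$ is legitimate, and the ``definition by continuity'' remark in the background guarantees $c_{\lambda\mu}(1)=\delta_{\lambda\mu}$.

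For (2), we apply $\rho$ to each term $g\circ\bm_{\delta_\pi[\mu]}=\bm_{g\delta_\pi[\mu]}$. Since $\lambda(g\delta_\pi[\mu])=\mu$, Lemma \ref{lem:rho} gives $\rho(\bm_{g\delta_\pi[\mu]})=\mu^!\,m_\mu$. (Alternatively, $\rho$ commutes letters, so $\rho(g\circ f)=\rho(f)$.) Hence
\[
\rho(\bP_\pi(t))=\sum_{\mu\le\lambda}\frac{c_{\lambda\mu}(t)}{\mu^!}\,|G_\pi|\,\mu^!\,m_\mu=\lambda!\lambda^!\sum_{\mu\le\lambda}c_{\lambda\mu}(t)\,m_\mu=\lambda!\lambda^!\,P_\lambda(t),
\]
by (\ref{E:Pm11}) and $|G_\pi|=\lambda!\lambda^!$. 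Setting $t=0$ and using $P_\lambda(0)=s_\lambda$ gives the statement for $\bs_\pi$.

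There is no real obstacle here. The only things to check are the bookkeeping facts $|G_\pi|=\lambda!\lambda^!$ (from (\ref{eq:G_pi_1})) and $\rho(\bm_\sigma)=\sigma^!m_{\lambda(\sigma)}$ with $\sigma^!=\mu^!$, so that the factors $1/\mu^!$ cancel exactly.
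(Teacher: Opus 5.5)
Your proof is correct and matches the paper's argument. For (1) you specialize at $t=1$ using $c_{\lambda\mu}(1)=\delta_{\lambda\mu}$ and $|G_\pi|=\lambda!\lambda^!$. For (2) you apply Lemma~\ref{lem:rho} term by term, so that each $g\circ\bm_{\delta_\pi[\mu]}$ projects to $\mu^!\,m_\mu$ and the factors $1/\mu^!$ cancel. Your write-up is slightly more explicit than the paper's, since it spells out why $g\circ\bm_\pi=\bm_{g\pi}=\bm_\pi$ for $g\in G_\pi$.
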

    \begin{proof}
    	Since $c_{\lambda\mu}(1)=\delta_{\lambda\mu}$ and $|G_\pi|=\lambda !\lambda^!$, then (1) follows easily from the definition (\ref{E:pi4}) of $\bP_{\pi}(t)$. Furthermore, Lemma \ref{lem:rho} yields that  
    	\begin{align*}
    		\rho(\bP_{\pi}(t))=\sum_{\mu\leq \lambda}\frac{c_{\lambda\mu}(t)}{\mu^!}
    		\lambda ! \lambda^!(\mu^! m_{\mu})= \lambda ! \lambda^!\sum_{\mu\leq \lambda}c_{\lambda\mu}(t) m_\mu= \lambda ! \lambda^! P_\lambda(t),
    	\end{align*}
    	and thus $\rho(\bs_{\pi})=\lambda!\lambda^!s_{\lambda}$ by taking $t=0$.
    \end{proof}
     We now provide a noncommutative version of the classical HL expansion of Schur function as given in (\ref{E:sP}).
    \begin{proposition} 
    	Let $\pi$ be a set partition with $\lambda(\pi)=\lambda$, let
    	\begin{align*}
    		\widetilde{\bP}_{\pi}(t)&=\sum_{\mu\leq \lambda}\frac{c_{\lambda\mu}(t)}{\mu^!}\,\bm_{\delta_\pi [\mu]}
    	\end{align*}
        where $c_{\lambda\mu}(t)=[m_{\mu}]P_{\lambda}$. Then we have
    	\begin{align}\label{E:stoP}
    		\bs_{\pi}=\sum_{\mu\le \lambda}K_{\lambda\mu}(t)\sum_{g\in G_{\pi}}g\circ \widetilde{\bP}_{\delta_{\pi}[\mu]}(t).
    	\end{align}
    \end{proposition}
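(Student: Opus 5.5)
The plan is to compare the two sides of (\ref{E:stoP}) coefficient by coefficient in the basis $\{g\circ\bm_{\delta_\pi[\nu]}\}$, and to reduce everything to the classical identity (\ref{E:sP}) together with the triangular expansion (\ref{E:Pm11}).

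First expand the left-hand side. By Definition \ref{def:Ppi} at $t=0$, and since $c_{\lambda\nu}(0)=K_{\lambda\nu}$,
\[
\bs_\pi=\sum_{\nu\le\lambda}\frac{K_{\lambda\nu}}{\nu^!}\sum_{g\in G_\pi}g\circ\bm_{\delta_\pi[\nu]}.
\]
Next expand the right-hand side. Write $\sigma=\delta_\pi[\mu]$, so that
\[
\widetilde{\bP}_\sigma(t)=\sum_{\nu\le\mu}\frac{c_{\mu\nu}(t)}{\nu^!}\bm_{\delta_\sigma[\nu]}.
\]
The right-hand side then becomes
\[
\sum_{\nu}\frac{1}{\nu^!}\sum_{\nu\le\mu\le\lambda}K_{\lambda\mu}(t)c_{\mu\nu}(t)\sum_{g\in G_\pi}g\circ\bm_{\delta_\sigma[\nu]}.
\]
Suppose we already know the orbit-sum identity
\begin{equation*}
\sum_{g\in G_\pi}g\circ\bm_{\delta_{\delta_\pi[\mu]}[\nu]}=\sum_{g\in G_\pi}g\circ\bm_{\delta_\pi[\nu]}\qquad(\nu\le\mu\le\lambda).
\end{equation*}
Then the inner coefficient is $\sum_\mu K_{\lambda\mu}(t)c_{\mu\nu}(t)$. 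Taking $[m_\nu]$ of both sides of (\ref{E:sP}) and using (\ref{E:Pm11}) shows that this sum equals $[m_\nu]s_\lambda=K_{\lambda\nu}$. The two expansions then agree, which proves (\ref{E:stoP}).

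The main obstacle is the orbit-sum identity. I would attack it as follows. The standardizing permutation $\delta_\sigma$ of $\sigma=\delta_\pi[\mu]$ satisfies $\delta_\sigma[\mu]=\delta_\pi[\mu]$, so $\delta_\sigma=\delta_\pi u$ for some $u\in G_{[\mu]}$. By (\ref{E:mbph}) and Lemma \ref{lem:pi_to_sigma} we have $G_\pi=\delta_\pi G_{[\lambda]}\delta_\pi^{-1}$. Conjugating by $\delta_\pi$, the identity reduces to
\[
\sum_{h\in G_{[\lambda]}}\bm_{hu[\nu]}=\sum_{h\in G_{[\lambda]}}\bm_{h[\nu]}.
\]
Both sides are sums over $G_{[\lambda]}$-orbits weighted by stabilizer orders. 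So it suffices to show that $u[\nu]$ and $[\nu]$ lie in the same $G_{[\lambda]}$-orbit, i.e.\ that $u$ can be replaced by an element of $G_{[\lambda]}$ modulo $G_{[\nu]}$ on the right.

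I would first try to exploit the special form of standardization. The one-line word of $\delta_\sigma$ is obtained from that of $\delta_\pi$ by sorting within the consecutive segments of lengths $\mu_1,\mu_2,\dots$ and then reordering equal-size segments by their minima. I would aim to realize this sorting by permutations that either preserve the blocks of $[\lambda]$ (and so lie in $R_{[\lambda]}$) or preserve the blocks of $[\nu]$.

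If this compatibility fails for general $\mu\le\lambda$, the fallback is to read $\widetilde{\bP}_{\delta_\pi[\mu]}$ as using the fixed permutation $\delta_\pi$ in place of $\delta_{\delta_\pi[\mu]}$. That is the natural meaning given the notation $\bm_{\delta_\pi[\mu]}$ in Definition \ref{def:Ppi}. Under that reading, the orbit-sum identity is a tautology and the coefficient comparison above completes the proof directly.
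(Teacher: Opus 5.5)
Your coefficient comparison, resting on $\sum_{\nu\le\mu\le\lambda}K_{\lambda\mu}(t)c_{\mu\nu}(t)=K_{\lambda\nu}$ (that is, $K(t)c(t)=K(1)$), is exactly the paper's proof. Your fallback reading is also precisely what the paper uses. Its last step rewrites $\sum_{\nu\le\mu}\frac{c_{\mu\nu}(t)}{\nu^!}\bm_{\delta_\pi[\nu]}$ as $\widetilde{\bP}_{\delta_\pi[\mu]}(t)$ without comment. In other words, it keeps $\delta_\pi$ rather than the standardization of $\delta_\pi[\mu]$. Under that reading your argument is complete.

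You should, however, drop the primary plan. The orbit-sum identity is false in general, so no realization of the sorting by elements of $R_{[\lambda]}$ or $G_{[\nu]}$ can work. Here is a counterexample:
\begin{itemize}
\item Take $\pi=234/1$. Then $\lambda=(3,1)$, $\delta_\pi=2\,3\,4\,1$ in one-line notation, and $G_\pi=\mathfrak{S}_{\{2,3,4\}}$.
\item For $\mu=(2,2)$ we get $\sigma=\delta_\pi[\mu]=14/23$, whose standardization is $\delta_\sigma=1\,4\,2\,3$.
\item For $\nu=(2,1,1)$ this gives $\delta_\sigma[\nu]=14/2/3$, whereas $\delta_\pi[\nu]=23/1/4$.
\item Since $G_\pi$ fixes $1$, and $1$ lies in a $2$-block in the first set partition but is a singleton in the second, the two lie in different $G_\pi$-orbits.
\end{itemize}
In your $[\lambda]$-frame, $u=4\,3\,1\,2$ sends $[\nu]$ to $34/1/2$, which is not in the $G_{[\lambda]}$-orbit of $12/3/4$.

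The failure is not cancelled elsewhere. Under the literal reading, the coefficient of $\bm_{12/3/4}$ on the right side of (\ref{E:stoP}) comes only from $(\mu,\nu)=((2,2),(2,1,1))$: the $\mu=(3,1)$ and $\mu=(2,1,1)$ terms keep $1$ as a singleton. That coefficient equals $K_{31,22}(t)\,c_{22,211}(t)\cdot 2/2!=t(1-t)$, while the coefficient of $\bm_{12/3/4}$ in $\bs_\pi$ is $0$. So the proposition holds only with $\delta_\pi$ held fixed, which is the reading the paper's proof silently adopts.
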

    \begin{proof}
    	This is proved by examining the transition matrices between the Schur basis, the HL basis and the monomial basis of Sym. Let $K(t)=(K_{\lambda\mu}(t))_{\lambda,\mu}$ and $c(t)=(c_{\lambda\mu}(t))_{\lambda,\mu}$ be two matrices. Then $K(1)=(K_{\lambda\mu})_{\lambda,\mu}$ is the matrix of Kostka numbers, and (\ref{E:sP}) is equivalent to $K(t)c(t)=K(1)$, as shown by the following argument. By (\ref{E:stom2}), (\ref{E:Pm11}) and (\ref{E:sP}), 
    	\begin{align*}
    		s_{\lambda}=\sum_{\mu\le \lambda} K_{\lambda\mu}(t) \sum_{\nu\le \mu}c_{\mu\nu}(t)m_{\nu}
    		&=\sum_{\nu\le \lambda}\sum_{\nu\le \mu\le \lambda}K_{\lambda\mu}(t)c_{\mu\nu}(t)m_{\nu}
    		=\sum_{\nu\le \lambda}K_{\lambda\nu} m_{\nu}.
    	\end{align*}
    	The last equality implies that $K(t)c(t)=K(1)$. Since $c_{\lambda\mu}(0)=K_{\lambda\mu}$, we are led to
    	\begin{align*}
    		\widetilde{\bP}_{\pi}(0)&=\sum_{\nu\le \lambda}\frac{K_{\lambda\nu}}{\nu^!}\,\bm_{\delta_{\pi}[\nu]}
    		=\sum_{\nu\le \lambda}\frac{1}{\nu^!}\sum_{\nu\le \mu\le\lambda}K_{\lambda\mu}(t)c_{\mu\nu}(t)\,\bm_{\delta_{\pi}[\nu]},\\
    		&=\sum_{\mu\le \lambda}K_{\lambda\mu}(t)\sum_{\nu\le \mu}\frac{c_{\mu\nu}(t)}{\nu^!}\,\bm_{\delta_{\pi}[\nu]},\\
    		&=\sum_{\mu\le \lambda}K_{\lambda\mu}(t)\widetilde{\bP}_{\delta_{\pi}[\mu]}(t).
    	\end{align*}
        Consequently, (\ref{E:stoP}) is obtained by noting that $\bs_{\pi}=\sum_{g\in G_{\pi}}g\circ \widetilde{\bP}_{\pi}(0)$.
    \end{proof}
   \begin{example}
    	Let $\pi=123/4=[3,1]$, then
    	\begin{align*}
    		\widetilde{\bP}_{123/4}(0)&=K_{31,31}(t)\widetilde{\bP}_{123/4}(t)+K_{31,22}(t)\widetilde{\bP}_{12/34}(t)\\
    		&\qquad+K_{31,211}(t)\widetilde{\bP}_{12/3/4}(t)+K_{31,1111}(t)\widetilde{\bP}_{1/2/3/4}(t).\\
    		&=\widetilde{\bP}_{123/4}(t)+t\widetilde{\bP}_{12/34}(t)+(t+t^2)\widetilde{\bP}_{12/3/4}(t)+(t^3+t^4+t^5)\widetilde{\bP}_{1/2/3/4}(t).
    	\end{align*}
    	  Since $G_{\pi}=\mathfrak{S}_{3}$, (\ref{E:stoP}) yields that 
    	\begin{align*}
    		\bs_{123/4}&=6\widetilde{\bP}_{123/4}(t)+2t(\widetilde{\bP}_{12/34}(t)+\widetilde{\bP}_{13/24}(t)+\widetilde{\bP}_{14/23}(t))\\
    		&\qquad+2(t+t^2)(\widetilde{\bP}_{12/3/4}(t)+\widetilde{\bP}_{13/2/4}(t)+\widetilde{\bP}_{23/1/4}(t))\\
    		&\qquad+6(t^3+t^4+t^5)\widetilde{\bP}_{1/2/3/4}(t).
    	\end{align*}
    \end{example}
   \section{Product rule for Hall-Littlewood functions in NCSym}\label{S:4}
   
   The purpose of this section is to provide the product rule for HL functions in NCSym. We develop a star-product rule in the context of NCSym to describe how the star product $\bP_{\mu}(\bx;t)\star\bPP_{\nu}(\bx;t)$ of a lifted and a non-lifted HL functions expands as a linear combination of HL functions $\bP_{\pi}(\bx;t)$ with coefficients in $\mathbb{Z}[t]$. Throughout this section, we omit the variable $t$ for brevity.
   
   Let us briefly explain how we arrive at the star product. Since the HL functions $\bP_{\tau}$ form a $\mathbb{Q}[t]$-basis of NCSym by Theorem \ref{T:basis}, the multiplication $\bP_{\pi}\bP_{\sigma}$ of two HL functions is a linear combination of $\bP_{\tau}$ with $\mathbb{Q}[t]$-coefficients. That is, there exist functions $d_{\pi\sigma}^{\tau}(t)\in\mathbb{Q}[t]$ such that 
   \begin{align}\label{E:prod1}
   	\bP_{\pi}\bP_{\sigma}=\sum_{\tau}d_{\pi\sigma}^{\tau}(t)\bP_{\tau}.
   \end{align}
   This is in analogy with the product rule for HL functions in Sym \cite[Chapter III, Section 3]{Mac95}: namely, there are polynomials $c_{\mu\nu}^{\lambda}(t)\in\mathbb{Z}[t]$ such that 
   \begin{align}\label{E:prodSym}
   	P_{\mu}P_{\nu}=\sum_{\lambda}c_{\mu\nu}^{\lambda}(t)P_{\lambda}.
   \end{align} 
   If $\nu=\langle1^r\rangle$ is a column, then $P_{\nu}=m_{\nu}=e_{r}$ by (\ref{E:Pm11}) and $c_{\mu\nu}^{\lambda}(t)\in\mathbb{N}[t]$; see \cite[Chapter III, Equation (3.2)]{Mac95}. However, this is not true for (\ref{E:prod1}); namely, if  $\sigma=1/2/\cdots/r$, we have $\bP_{\sigma}=\be_{[r]}=\bs_{\langle1^r\rangle}$, but $d_{\pi\sigma}^{\tau}(t)\not\in\mathbb{Q}^+(t)$. For example, take $\pi=12/3$ and $\sigma=1$, we have
   \begin{align}
   	\bP_{12/3}\bs_{1}
   	&=\frac{1}{3}\bP_{124/3}+\frac{t+5}{12}\bP_{12/34}-\frac{1-t}{12}\bP_{13/24}-\frac{1-t}{12}\bP_{14/23}\notag\\
   	&\quad+\frac{3+8t+t^2}{12}\bP_{12/3/4}+\frac{1-t^2}{12}(\bP_{14/2/3}+\bP_{24/1/3})\notag\\
   	&\quad+\frac{(1-t)^2}{12}(\bP_{13/2/4}+\bP_{23/1/4})-\frac{(1-t)^2}{12}\bP_{34/1/2}.\label{E:eg1}
   \end{align}
   We then consider the sum $\sum_{\lambda(\pi)=\mu}\bP_{\pi}\bs_1=\bP_{\mu}\bs_1$, and it turns out that $[\bP_{\tau}]\bP_{\mu}\bs_1\not \in \mathbb{Q}^+(t)$ for some set partitions $\tau$ and partitions $\mu$. We continue by illustrating this with our running example, namely the case $\mu=(2,1)$. It follows from (\ref{E:eg1}) that
   \begin{align*}
   	\bP_{21}\bs_1&=\bP_{12/3}\bs_1+\bP_{13/2}\bs_1+\bP_{23/1}\bs_1=\bP_{12/3}\bs_1+(23)(\bP_{12/3}\bs_1)+(13)(\bP_{12/3}\bs_1)\\
   	&=\frac{1}{3}(\bP_{124/3}+\bP_{234/1}+\bP_{134/2})+\frac{1+t}{4}(\bP_{12/34}+\bP_{13/24}+\bP_{14/23})\\
   	&\qquad+\frac{3t^2+4t+5}{12}(\bP_{12/3/4}+\bP_{13/2/4}+\bP_{23/1/4})\\
   	&\qquad+\frac{2t-3t^2+1}{12}(\bP_{14/2/3}+\bP_{24/1/3}+\bP_{34/1/2}).
   \end{align*}
   This motivates us to introduce the star multiplication $\star$ of HL functions and prove its positive HL  expansion for certain specializations. For instance, $[\bP_{\tau}](\bP_{\mu}\star \bs_1)=[P_{\lambda(\tau)}](P_{\mu}s_1)\in \mathbb{N}[t]$ for all partitions $\mu$ and set partitions $\tau$.
   
   For any integer $1\le r<n$, let $\S_{r,n}$ be a set of coset representatives of the subgroup $\mathfrak{S}_{n-r}$ in $\mathfrak{S}_n$, defined as follows, where we recall that $(i\,\,j)\in \mathfrak{S}_n$ is the transposition that swaps $i$ and $j$, 
   \begin{align*}
   	\S_{r,n}=\{(i_r\,n)(i_{r-1}\,n-1)\cdots(i_1\,n-r+1): 1\le i_k\le n-r+k\textrm{ and }1\le k\le r\}.
   \end{align*}
   Then $|\S_{r,n}|=n!/(n-r)!$ and 
   \begin{align}\label{E:coset}
   	\mathfrak{S}_n=\bigsqcup_{\delta\in \S_{r,n}} \delta\,\mathfrak{S}_{n-r}.
   \end{align} 
   Only when $i_k=n-r+k$ for all $1\le k\le r$, we get the identity permutation $\varepsilon$ in $\S_{r,n}$. Given two polynomials $\CMcal{F},\CMcal{G}\in \mathbb{Q}[[\bx]]$ of homogeneous degrees $n-r$ and $r$, respectively, we define their star product by
   \begin{align}\label{E:starproduct}
   	\CMcal{F}\star \CMcal{G}=\sum_{\delta\in\S_{r,n}}\delta^{-1}\circ (\CMcal{F}\CMcal{G})
   \end{align}
   where the action of $\mathfrak{S}_n$ on $\mathbb{Q}[[\bx]]$ is defined in (\ref{E:delta}).
   
   Our third main result provides the star-product rule for a lifted and a non-lifted HL functions in NCSym, as a noncommutative analogue to the multiplication rule (\ref{E:prodSym}) for two HL functions. 
   
   \begin{theorem}\label{T:pieri1}
   	For $n\ge 2$ and $r\in [n-1]$,
   	let $\mu\vdash (n-r)$ and $\nu\vdash r$ be two partitions. Then 
   	\begin{align*}
   		\bP_{\mu}\star \bPP_{\nu}=\sum_{\lambda}c_{\mu\nu}^{\lambda}(t)\,\bP_{\lambda}
   	\end{align*}
  summed over partitions $\lambda$ of $n$, and $c_{\mu\nu}^{\lambda}(t)=[P_{\lambda}](P_{\mu}P_{\nu})\in \mathbb{Z}[t]$. 
   Consequently, when $t=0$,
   \begin{align}\label{E:LRs}
   	\bS_{\mu}\star \bs_{\nu}=\sum_{\lambda}c_{\mu\nu}^{\lambda}\bS_{\lambda},
   \end{align}
   where $c_{\mu\nu}^{\lambda}=c_{\mu\nu}^{\lambda}(0)$ is the Littlewood-Richardson coefficient.  
   \end{theorem}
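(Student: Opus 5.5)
The plan is to show that the star product $\bP_{\mu}\star \bPP_{\nu}$ equals the full $\mathfrak{S}_n$-symmetrization (over positions) of the ordinary product $\bP_{\mu}\bPP_{\nu}$, up to a scalar. I then identify any such symmetrization with the lift of its commutative image.

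\emph{Step 1 (a lifting lemma).} For a homogeneous $X\in\mathbb{Q}[[\bx]]$ of degree $n$, set $\mathrm{Sym}_n(X)=\sum_{w\in\mathfrak{S}_n} w\circ X$. For a single monomial $\bx_{i_1}\cdots\bx_{i_n}$, $\mathrm{Sym}_n$ produces a constant multiple of the sum of all noncommutative rearrangements of that monomial, so $\mathrm{Sym}_n(X)$ depends only on $\rho(X)$. I then claim that if $\rho(X)=f\in \mathrm{Sym}^n$, then
\begin{align*}
\mathrm{Sym}_n(X)=n!\,\tilde{\rho}(f).
\end{align*}
By linearity and the fact that $\mathrm{Sym}_n(X)$ depends only on $\rho(X)$, it suffices to check this for one convenient preimage of $f=m_\lambda$. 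The natural choice is $X=\frac{1}{\lambda^!}\bm_{[\lambda]}$, which satisfies $\rho(X)=m_\lambda$ by Lemma \ref{lem:rho}. The relation $\mathrm{Sym}_n(\bm_{[\lambda]})=\lambda!\lambda^!\sum_{\lambda(\sigma)=\lambda}\bm_\sigma$ was already used in the proof of Theorem \ref{T:sumP1}. Combining it with (\ref{E:liftm}) gives the claim, and in particular $\mathrm{Sym}_n(X)\in\mathrm{NCSym}$.

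\emph{Step 2 (the star product is a symmetrization).} The lifted function $\bP_\mu$ is a linear combination of orbit sums $\sum_{\lambda(\sigma)=\kappa}\bm_\sigma$, so it is invariant under $\mathfrak{S}_{n-r}$ acting on its $n-r$ positions. Hence $\bP_\mu\bPP_\nu$ is invariant under $\mathfrak{S}_{n-r}\subseteq\mathfrak{S}_n$ acting on the first $n-r$ positions. By the coset decomposition (\ref{E:coset}), each $w\in\mathfrak{S}_n$ is written uniquely as $w=\delta u$ with $\delta\in\S_{r,n}$ and $u\in\mathfrak{S}_{n-r}$. Using this, I will rewrite $\sum_{\delta\in\S_{r,n}}\delta^{-1}\circ(\bP_\mu\bPP_\nu)$ as $\frac{1}{(n-r)!}\mathrm{Sym}_n(\bP_\mu\bPP_\nu)$.

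This is the step I expect to be the main obstacle. With $\delta^{-1}$ in (\ref{E:starproduct}) one is really summing over right coset representatives. So I must check carefully, with the convention (\ref{E:delta}) (whether $w\mapsto w\circ$ is a left or right action), that the stabilizer absorbs on the correct side. If it does not directly, I would first try replacing $\S_{r,n}$ by $\S_{r,n}^{-1}$: the two sets have the same size and generate the same cosets from the other side. Failing that, I would verify the identity directly on monomials of $\bP_\mu\bPP_\nu$, whose first $n-r$ letters are symmetrized by $\bP_\mu$.

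\emph{Step 3 (compute the commutative image and conclude).} Since $\rho$ is multiplicative, Proposition/Lemma \ref{lem:rho} applied to the lift gives $\rho(\bP_\mu)=(n-r)!\,\rho\tilde\rho(P_\mu)=(n-r)!P_\mu$. The reading-word definition (\ref{E:tabP2}) together with (\ref{E:SSYT_P}) gives $\rho(\bPP_\nu)=P_\nu$. Therefore $\rho(\bP_\mu\bPP_\nu)=(n-r)!\,P_\mu P_\nu$. By Steps 1 and 2,
\begin{align*}
\bP_{\mu}\star\bPP_{\nu}=\frac{1}{(n-r)!}\,n!\,\tilde{\rho}\big((n-r)!P_\mu P_\nu\big)=n!\,\tilde\rho\Big(\sum_\lambda c^{\lambda}_{\mu\nu}(t)P_\lambda\Big)=\sum_\lambda c^\lambda_{\mu\nu}(t)\bP_\lambda .
\end{align*}
The second equality uses (\ref{E:prodSym}), and the third uses linearity of $\tilde\rho$ and $\bP_\lambda=|\lambda|!\,\tilde\rho(P_\lambda)$. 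Setting $t=0$ gives (\ref{E:LRs}), since $P_\lambda(0)=s_\lambda$ and the coefficients $c^\lambda_{\mu\nu}(0)$ are the Littlewood--Richardson coefficients.
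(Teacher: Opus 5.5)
Your Steps 1 and 3 are correct, and the overall route works. But Step 2 is left open, and the particular choice of $\S_{r,n}$ matters exactly there. By (\ref{E:delta}), $w\mapsto w\circ$ is a left action, and $Y=\bP_{\mu}\bPP_{\nu}$ satisfies $u\circ Y=Y$ for $u\in\mathfrak{S}_{n-r}$. Hence $\sum_{w\in\mathfrak{S}_n}w\circ Y=(n-r)!\sum_{d\in D}d\circ Y$ whenever $\mathfrak{S}_n=\bigsqcup_{d\in D}d\,\mathfrak{S}_{n-r}$. The star product sums over $D=\{\delta^{-1}:\delta\in\S_{r,n}\}$, so you need the right-coset decomposition $\mathfrak{S}_n=\bigsqcup_{\delta\in\S_{r,n}}\mathfrak{S}_{n-r}\,\delta$. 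However, (\ref{E:coset}) only states the left-coset decomposition.

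This does not hold for an arbitrary left transversal. The set $\T$ in Example \ref{Example:1} is a left transversal of $\mathfrak{S}_3$ in $\mathfrak{S}_4$. Yet $(2\,3\,4)^{-1}$, $(3\,4)$ and $(1\,2\,3\,4)^{-1}$ all send $4$ to $3$, so $\sum_{\tau\in\T}\tau^{-1}\circ Y$ collapses to $Y+3\,(3\,4)\circ Y$, which is not a symmetrization. Your first fallback, replacing $\S_{r,n}$ by $\S_{r,n}^{-1}$, changes the definition of $\star$ and is therefore not available. The second fallback, verifying on monomials, is not an argument.

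The missing fact is $\S_{r,n}^{-1}=\S_{r,n}$. If $j$ is fixed by $w'$ and $i<j$, then $(i\,j)\,w'$ is $w'$ with $j$ inserted immediately before $i$ in the cycle of $i$.

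\begin{itemize}
\item Building $(i_r\,n)\cdots(i_1\,n-r+1)$ from the right, starting at $\varepsilon$, inserts $n-r+1,\dots,n$ one at a time. So no two elements of $[n-r]$ ever share a cycle.
\item Conversely, take any $w$ with that property. Set $i_r=w(n)$ and pass to $(i_r\,n)w$, which deletes $n$ from its cycle. Repeat with $n-1,\dots,n-r+1$; the process ends at $\varepsilon$, and $i_k\le n-r+k$ at each stage.
\end{itemize}

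Hence $\S_{r,n}$ is exactly the set of permutations of $[n]$ each of whose cycles contains at most one element of $[n-r]$. This set is closed under inversion, which also makes it independent of the composition convention. Therefore
$\sum_{\delta\in\S_{r,n}}\delta^{-1}\circ Y=\sum_{\delta\in\S_{r,n}}\delta\circ Y=\frac{1}{(n-r)!}\sum_{w\in\mathfrak{S}_n}w\circ Y$
by (\ref{E:coset}), and your Step 3 finishes the proof. The paper silently uses the same fact: in its last display, $\delta^{-1}\circ$ actually produces the indices $T_{v\delta(j)}$, not $T_{\delta v(j)}$.

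Once this is filled in, your argument is genuinely different from the paper's and cleaner. The paper proves the symmetrized tableau formula (\ref{E:tabP}) and then transfers the commutative identity (\ref{E:Lp1}) to noncommuting variables via tableau multiplication with matched labelings. That transfer step (\ref{E:nc}) is not justified termwise; it is legitimate only after full symmetrization, which is precisely your Step 1. Your route needs no tableau combinatorics, and it shows the only property of $\bPP_{\nu}$ used is $\rho(\bPP_{\nu})=P_{\nu}$.
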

  We make a few remarks before proceeding to the proof of Theorem \ref{T:pieri1}.
  
  Since $\bP_{\sigma}$ is a refinement of $\bP_{\lambda(\sigma)}$ by Theorem \ref{T:sumP1} and $c_{\mu\nu}^{\lambda}(t)\in \mathbb{N}[t]$ for $\nu=\langle 1^r\rangle$, it follows from Theorem \ref{T:pieri1} that 
   \begin{align*}
   	\sum_{\lambda(\pi)=\lambda}[\bP_{\sigma}](\bP_{\pi}\star \be_{[r]})=[\bP_{\sigma}](\bP_{\lambda}\star \be_{[r]})=[P_{\lambda(\sigma)}](P_{\lambda}e_r)\in \mathbb{N}[t].
   \end{align*}
   Nevertheless, the coefficient $[\bP_{\sigma}](\bP_{\pi}\star \be_{[r]})\not \in \mathbb{N}[t]$, even for the simplest case where $r=1$. See Example \ref{Example:1}. It is also important to note that if we replace $\S_{r,n}$ with another set of coset representatives of $\mathfrak{S}_{n-r}$ in $\mathfrak{S}_n$ in the definition of the star product, then we obtain a different product for $\bP_{\pi}\star \bs_1$, whereas the product $\bP_{\lambda}\star \bs_1$ remains the same. This will be evident from the proof of Theorem \ref{T:pieri1}.
   
   Now we establish Theorem \ref{T:pieri1}.
   
   {\em Proof of Theorem \ref{T:pieri1}.} We first express the lifted HL function $\bP_{\lambda}(t)$ in terms of marked tableaux, following the ideas for lifted Schur functions in \cite[Section 6]{RS}. Let $\lambda$ be a partition of $n$, then we claim that 
   \begin{align}\label{E:tabP}
   	\bP_{\lambda}(\bx;t)=\sum_{w\in \mathfrak{S}_n}\sum_{T\in \mathrm{SSYT}(\lambda)}\psi_{T}(t)
   	\bx_{c(T_{w(1)})}
   	\bx_{c(T_{w(2)})}\cdots \bx_{c(T_{w(n)})},
   \end{align} 
   where the boxes of $T$ are labelled $T_1,T_2,\ldots,T_n$ in an arbitrary order, and $c(T_i)$ is the content of box $T_i$. 
   By (\ref{E:clam1}), we write 
   \begin{align}\label{E:tabPP}
   	P_{\lambda}(t)=\sum_{\mu\le \lambda}c_{\lambda\mu}(t)m_{\mu}=\sum_{\mu\le\lambda}\sum_{T\in \mathrm{SSYT}(\lambda,\mu)}\psi_{T}(t)m_{\mu},
   \end{align}
   which by (\ref{E:liftm}) implies that for any set partition $\sigma\vdash [n]$ with $\lambda(\sigma)=\mu$, 
   \begin{align}\label{E:P2}
   	[\bm_{\sigma}]\bP_{\lambda}=[\bm_{\sigma}]n!\tilde{\rho}(P_{\lambda})
   	=\mu!\sum_{T\in \mathrm{SSYT}(\lambda,\mu)}\psi_{T}(t).
   \end{align}
   On the other hand, for any tableau $T\in \mathrm{SSYT}(\lambda,\mu)$ and a permutation $w\in \mathfrak{S}_n$, 
   \begin{align*}
   	\bx_{c(T_{w(1)})}\bx_{c(T_{w(2)})}\cdots \bx_{c(T_{w(n)})}
   \end{align*}
   is a monomial in $\bm_{\sigma}$, where $\sigma$ is the unique set partition determined by the contents of $T$ and the labeling $w$. Since there are exactly $\lambda(\pi)!=\mu!$ permutations of $[n]$ that fix this monomial, we conclude that 
   \begin{align*}
   	[\bm_{\sigma}]\sum_{w\in \mathfrak{S}_n}\sum_{T\in \mathrm{SSYT}(\lambda)}\psi_{T}(t)
   	\bx_{c(T_{w(1)})}
   	\bx_{c(T_{w(2)})}\cdots \bx_{c(T_{w(n)})}=\mu!\sum_{T\in \mathrm{SSYT}(\lambda,\mu)}\psi_{T}(t),
   \end{align*}
   which together with (\ref{E:P2}) proves the claim (\ref{E:tabP}). It follows from (\ref{E:tabP}) and (\ref{E:tabP2}) that 
 \begin{align}\label{E:Lp2}
 	\bP_{\mu}\star \bPP_{\nu}&=\sum_{\delta\in \S_{r,n}}\delta^{-1}\circ (\bP_{\mu}\bPP_{\nu})\\
 	&=\sum_{\delta\in \S_{r,n}}\delta^{-1}\circ (\sum_{v\in \mathfrak{S}_{n-r}}\sum_{P\in \mathrm{SSYT}(\mu)\atop Q\in \mathrm{SSYT}(\nu)}\psi_{P}(t)\psi_{Q}(t)
 	\bx_{c(P_{v(1)})}
 	\cdots \bx_{c(P_{v(n-r)})}\bx_{c(Q_{1})}
 	\cdots \bx_{c(Q_{r})}).\notag
 \end{align}
Returning to the HL functions in Sym, we express both sides of (\ref{E:prodSym}) using the tableau formula (\ref{E:SSYT_P}), thereby obtaining
\begin{align}\label{E:Lp1}
	\sum_{P\in\mathrm{SSYT}(\mu)\atop Q\in \mathrm{SSYT}(\nu)}\psi_{P}(t)\psi_{Q}(t)x^{P+Q}
	=\sum_{\lambda}c_{\mu\nu}^{\lambda}(t)\sum_{T\in \mathrm{SSYT}(\lambda)}\psi_{T}(t)x^{T}.
\end{align}
Each pair $(P,Q)\in \mathrm{SSYT}(\mu)\times\mathrm{SSYT}(\nu)$ on the left-hand side of (\ref{E:Lp1}) determines a unique $T\in \mathrm{SSYT}(\lambda)$ on the right-hand side via the rule of tableau multiplication (see Section 5 of \cite{Fulton97}). For $T\in\mathrm{SSYT}(\lambda)$ corresponding to the pair $(P,Q)$, we specify a labeling of $T$ from given labelings of $P$ and $Q$. That is, we label the boxes of $T$ so that $c(T_i)=c(P_i)$ for $1\le i\le n-r$ and  $c(T_{n-r+i})=c(Q_i)$ for $1\le i\le r$. Then, 
  \begin{align*}
  	\bx_{c(P_{v(1)})}\cdots \bx_{c(P_{v(n-r)})}\bx_{c(Q_{1})}\cdots \bx_{c(Q_{r})}=\bx_{c(T_{v(1)})}
  	\cdots \bx_{c(T_{v(n-r)})}\bx_{c(T_{n-r+1})}\cdots \bx_{c(T_{n})}
  \end{align*}
  for any permutation $v\in \mathfrak{S}_{n-r}$. Substituting the variables $x$ by $\bx$ in (\ref{E:Lp1}) yields that 
  \begin{align}
  	&\quad \sum_{P\in \mathrm{SSYT}(\mu)\atop Q\in \mathrm{SSYT}(\nu)}\psi_{P}(t)\psi_{Q}(t)
  	\bx_{c(P_{v(1)})}\cdots \bx_{c(P_{v(n-r)})}\bx_{c(Q_{1})}\cdots \bx_{c(Q_{r})}\label{E:nc}\\
  	&=\sum_{\lambda}c_{\mu\nu}^{\lambda}(t)\sum_{T\in \mathrm{SSYT}(\lambda)}\psi_{T}(t)\bx_{c(T_{v(1)})}
  	\cdots \bx_{c(T_{v(n-r)})}\bx_{c(T_{n-r+1})}\cdots \bx_{c(T_{n})}.\notag
  \end{align}
   It follows from (\ref{E:coset}) that 
   \begin{align*}
   	&\quad\,\sum_{\delta\in \S_{r,n}}\sum_{v\in \mathfrak{S}_{n-r}}\delta^{-1}\circ (\bx_{c(T_{v(1)})}
   	\bx_{c(T_{v(2)})}\cdots \bx_{c(T_{v(n-r)})}\bx_{c(T_{n-r+1})}\cdots \bx_{c(T_{n})})\\
   	&=\sum_{\delta\in \S_{r,n}}\sum_{v\in \mathfrak{S}_{n-r}}\delta^{-1}\circ (\bx_{c(T_{v(1)})}
   		\bx_{c(T_{v(2)})}\cdots \bx_{c(T_{v(n-r)})}\bx_{c(T_{v(n-r+1)}))}\cdots \bx_{c(T_{v(n)})})\\
   	&=\sum_{\delta\in \S_{r,n}}\sum_{v\in \mathfrak{S}_{n-r}}\bx_{c(T_{\delta v(1)})}
   	\bx_{c(T_{\delta v(2)})}\cdots \bx_{c(T_{\delta v(n-r)})}\bx_{c(T_{\delta v(n-r+1)})}\cdots \bx_{c(T_{\delta v(n)})}\\
   	&=\sum_{w\in \mathfrak{S}_n}\bx_{c(T_{w(1)})}
   	\bx_{c(T_{w(2)})}\cdots \bx_{c(T_{w(n-1)})}\bx_{c(T_{w(n)})}.
   \end{align*}
   Together with (\ref{E:nc}), (\ref{E:Lp2}) and (\ref{E:tabP}) we conclude that $\bP_{\mu}\star \bPP_{\nu}=\sum_{\lambda}c_{\mu\nu}^{\lambda}(t)\bP_{\lambda}$, which is summed over all partitions $\lambda$ such that $|\lambda|=|\mu|+|\nu|$. Taking $t=0$ gives (\ref{E:LRs}), as desired.
   \qed

   \begin{example}\label{Example:1}
   	For $\lambda=(2,1)$ and $r=1$, 
   	\begin{align}
   		\bP_{21}\star \bs_1&=\bP_{21}\bs_1+(14)\bP_{21}\bs_1+(24)\bP_{21}\bs_1+(34)\bP_{21}\bs_1\notag\\
   		&=\bP_{31}+(1+t)\bP_{22}+(1+t)\bP_{211}\label{E:eg2}
   	\end{align}
   	as stated in Theorem \ref{T:pieri1}. This corresponds to the product rule $P_{21}s_1=P_{31}+(1+t)P_{22}+(1+t)P_{211}$ for HL functions $P_{\lambda}(t)$ in $\Sym$. Let $\T=\{\varepsilon, (2\,3\,4), (3\,4), (1\,2\,3\,4)\}$ be a set of coset representatives of $\mathfrak{S}_2$ in $\mathfrak{S}_4$. Then $\T\ne \S_{1,4}=\{\varepsilon, (1\,4), (2\,4), (3\,4)\}$, and one can check that 
   	\begin{align*}
   		\bP_{21}\bs_1+(2\,3\,4)\bP_{21}\bs_1+(3\,4)\bP_{21}\bs_1+(1\,2\,3\,4)\bP_{21}\bs_1
   		=\bP_{21}\star \bs_1.
   	\end{align*}
   	However, this is not true for $\bP_{12/3}\star \bs_1$, namely, 
   	\begin{align*}
   		\bP_{12/3}\bs_1+(2\,3\,4)\bP_{12/3}\bs_1+(3\,4)\bP_{12/3}\bs_1+(1\,2\,3\,4)\bP_{12/3}\bs_1
   		\ne{\bP}_{12/3}\star \bs_1.
   	\end{align*}
   Moreover, even though $[\bP_{13/2/4}](\bP_{21}\star \bs_1)=[\bP_{211}](\bP_{21}\star \bs_1)=1+t$ by (\ref{E:eg2}), its refinement $[\bP_{13/2/4}](\bP_{12/3}\star \bs_1)\not \in \mathbb{N}[t]$. To be precise,
   	(\ref{E:eg1}) implies that 
   	\begin{align*}
   		\bP_{12/3}\star \bs_1&=\bP_{124/3}+\frac{1}{3}\bP_{123/4}+\frac{t+2}{3}\bP_{12/34}+\frac{2t+1}{6}(\bP_{13/24}+\bP_{14/23})\\
   		&\quad+\frac{4t+2}{3}P_{12/3/4}-\frac{t-1}{6}(\bP_{13/2/4}+\bP_{23/1/4})
   		+\frac{t+1}{2}(\bP_{14/2/3}+\bP_{24/1/3}),
   	\end{align*}
   	and clearly $[\bP_{13/2/4}](\bP_{12/3}\star \bs_1)\not \in \mathbb{N}[t]$.
   \end{example}
   
  	\section{Preliminaries on Quasisymmetric functions}\label{S:5}
  	We begin by reviewing the combinatorial concepts required for our work, before introducing the Hopf algebras and quasisymmetric functions of study. Some definitions are omitted for the sake of brevity, and we refer the reader to the papers \cite{HHL08,HLMvW10,HLMvW11}.
  	\subsection{Set compositions}
  	
  	A \emph{set composition} $\vartheta=\vartheta_1/\!/\cdots /\!/\vartheta_{l}$ of $[n]$ is a sequence of disjoint nonempty subsets $\vartheta_1,\ldots,\vartheta_{l}$ of $[n]$ such that $\cup_{i=1}^{l}\vartheta_i=[n]$, denoted by $\vartheta\vDash [n]$. Each $\vartheta_i$ is called a {\em block} of $\vartheta$, $\ell(\vartheta)=l$ is called the length of $\vartheta$, and $|\vartheta|=n$ is called the size of $\vartheta$. A set composition is also called an ordered set partition and we usually write the elements of each block in increasing order.
  	Let $\alpha(\vartheta)=(|\vartheta_1|, \dots, |\vartheta_l|)$ be the composition determined by the block-sizes of $\vartheta$, and let $\pi(\vartheta)$ be the set partition formed by the blocks of $\vartheta$.  %For two set compositions $\vartheta$ and $\eta$, suppose that after removing all double slashes $/\!/$ of $\vartheta$ and $\eta$, the sequences of numbers in $\vartheta$ and $\eta$ are identical. We then say $\vartheta$ \emph{corrupts} $\eta$ if  $\vartheta$ is  $\eta$ with some bars removed, and say that $\eta$ \emph{reforms} $\vartheta$ if $\vartheta$ is $\eta$ with some bars added. 
  	For a composition $\alpha=(\alpha_1,\ldots,\alpha_{n})$, let $\vartheta=\vartheta_1/\!/\cdots /\!/\vartheta_{\ell(\vartheta)}=\Delta(\alpha)$ be the set composition of $[n]$ such that $i\in \vartheta_j$ if and only if $j=|\{\alpha_r:\alpha_r<\alpha_i\}|+1$. 
  	
  	\begin{example}\label{Example:setcomp}
  		Let $\vartheta=2\,7/\!/1\,3/\!/4/\!/5\,6\vDash [7]$, then $\alpha(\vartheta)=(2,2,1,2)$ and $\pi(\vartheta)=13/27/56/4$. Furthermore, $\Delta(2,1,4,2)=2/\!/14/\!/3$.
  	\end{example}
  
   Given a composition $\alpha$ of $n$, let $\langle\alpha\rangle=\vartheta$ be a set composition of $[n]$ such that $\pi(\vartheta)=[\lambda(\alpha)]$, $\alpha(\vartheta)=\alpha$, and the relative order of blocks of equal size is the same in $[\lambda(\alpha)]$ and $\vartheta$. Moreover, let $\delta_{\vartheta}=\delta_{\pi(\vartheta)}$, then $\vartheta=\delta_{\vartheta}\langle\alpha\rangle$. 
   \begin{example}
   $\langle 2,1,2,3\rangle=45/\!/8/\!/67/\!/123$, and $\vartheta=15/\!/4/\!/68/\!/237=\delta_{\vartheta}\langle 2,1,2,3\rangle$ where $\delta_{\vartheta}=23715684$ written in one-line notation.
   \end{example}

    \subsection{Quasisymmetric functions in commuting variables}
   The Hopf algebra of quasisymmetric functions in commuting variables (QSym) first appeared in the paper by Stanley \cite{Stanley:72} in 1972 and was formally introduced by Gessel \cite{Gessel:84} in 1984. The algebra QSym contains Sym as a subalgebra and it has a natural duality with the Hopf algebra of noncommutative symmetric functions (NSym) developed by Gelfand {\em et al.} \cite{GKLLRT:84}.

   A quasisymmetric function in commuting variables is a bounded degree formal power series $f\in \mathbb{Q}[[X]]=\mathbb{Q}[[x_1,x_2,\ldots]]$ such that for all $k\in \mathbb{N}^+$ and $i_1<i_2<\cdots <i_k$, 
    \begin{align*}
    	[x_{i_1}^{\alpha_1}x_{i_2}^{\alpha_2}\cdots x_{i_k}^{\alpha_k}]f=[x_{1}^{\alpha_1}x_{2}^{\alpha_2}\cdots x_{k}^{\alpha_k}]f
    \end{align*}
    for all compositions $\alpha=(\alpha_1,\alpha_2,\cdots \alpha_k)$. The set of all quasisymmetric functions in commuting variables forms a graded algebra
\begin{align*}
	\mathrm{QSym}=\mathrm{QSym}^0 \oplus \mathrm{QSym}^1 \oplus \cdots \subseteq \mathbb{Q}[[X]],
\end{align*}
 where $\mathrm{QSym}^0=\mathrm{span}\{1\}$ and the $n$th graded $\mathrm{QSym}^n$ for $n\ge 1$ has the bases 
 \begin{align*}
 	\mathrm{Sym}^n=\mathrm{span}\{M_{\alpha}:\alpha\vDash n\}=\mathrm{span}\{S_{\alpha}:\alpha\vDash n\}
 \end{align*} 
 and they are defined as follows, given a composition $\alpha=(\alpha_1,\alpha_2,\ldots,\alpha_{k})\vDash n$. Note that Sym is a subalgebra of QSym and $\mathrm{Sym}^n=\mathrm{Sym}\cap \mathrm{QSym}^n$.
 
 The {\em monomial quasisymmetric function} $M_{\alpha}$ is given by 
 \begin{align*}
 	M_\alpha=\sum_{i_1<i_2<\cdots <i_k} x_{i_1}^{\alpha_1}x_{i_2}^{\alpha_2}\cdots x_{i_{k}}^{\alpha_{k}}.
 \end{align*}
 The {\em quasisymmetric Schur function} $S_{\alpha}$ was introduced by Haglund, Luoto, Mason and van Willigenburg \cite{HLMvW11} as an interpolation between Demazure atoms and Schur functions, much like the role that quasisymmetric functions play between non-symmetric and symmetric functions. Let $\alpha$ and $\beta$ be compositions. Then 
\begin{align}\label{E:quasiS}
	S_{\alpha}=\sum_{\beta}K_{\alpha\beta}M_{\beta}
\end{align}
 where $K_{\alpha\beta}$ is the number of semistandard composition tableaux (SSCs) of shape $\alpha$ and weight $\beta$; see \cite[Theorem 6.1]{HLMvW11}. Note that ComT in \cite{HLMvW11} is equivalent to SSC in \cite{HLMvW10}. For the precise definition of SSCs and Demazure atoms, we refer the reader to the papers \cite[Section 4]{HLMvW11} and \cite[Section 5]{HLMvW10}. 
 
 The integer $K_{\alpha\beta}$ in (\ref{E:quasiS}) is called {\em quasisymmetric Kostka coefficient} and its relation to the Kostka numbers is given by
\begin{align}\label{E:Kost}
	K_{\lambda\mu}=\sum_{\lambda(\alpha)=\lambda}K_{\alpha\gamma}
\end{align}
where $\gamma$ is an arbitrary composition with $\lambda(\gamma)=\mu$. 

The monomial quasisymmetric function $M_{\alpha}$ and the quasisymmetric Schur function $S_{\alpha}$ naturally refine their symmetric counterparts $m_{\lambda}$ and $s_{\lambda}$, respectively. That is, 
\begin{align}\label{E:sS}
	s_{\lambda}=\sum_{\lambda(\alpha)=\lambda}S_{\alpha}\quad \mbox{ and }\quad 
	m_{\lambda}=\sum_{\lambda(\alpha)=\lambda}M_{\alpha}.
\end{align}
\begin{example}
$s_{21}=S_{12}+S_{21}$, where $S_{12}=M_{12}+M_{111}$ and $S_{21}=M_{21}+M_{111}$. This agrees with $s_{21}=m_{21}+2m_{111}=(M_{21}+M_{12})+2M_{111}$.
\end{example}
The quasisymmetric HL function introduced by Haglund, Luoto, Mason and van Willigenburg generalizes the formula (\ref{E:sS}) by employing the combinatorial formula for non-symmetric Macdonald polynomials and its relation to symmetric Macdonald polynomials. 

The quasisymmetric HL function $\CMcal{L}_{\alpha}(X_n;t)$ in commuting variables $X_n=(x_1,\ldots,x_n)$ for a composition $\alpha$ with $\ell(\alpha)\le n$ is defined by
\begin{align*}
	\CMcal{L}_{\alpha}(t)=\CMcal{L}_{\alpha}(X_n;t)&=\sum_{\gamma^+=\alpha}E_{\gamma}(X_n;0,t),
\end{align*}
where $E_{\gamma}(X_n;q,t)$ is the non-symmetric Macdonald polynomial. By the combinatorial formula for $E_{\gamma}(X_n;q,t)$ due to Haglund, Haiman and Loehr \cite[Theorem 3.5.1]{HHL08}, it is immediate that
\begin{align}\label{E:quasiL}
	\CMcal{L}_{\alpha}(t)=\sum_{\gamma^+=\alpha}\sum_{\substack{T\in\textrm{NAT}(\gamma)\\ \maj(\hat{T})=0}}t^{\mathsf{coinv}(\hat{T})}(1-t)^{\kappa(T)}x^{T}
\end{align}
where $\textrm{NAT}(\gamma)$ denotes the set of  non-attacking fillings of column diagram $\gamma$, and $\kappa(T)$ counts the cells $u\in \gamma$ for which $\hat{T}(u)\ne \hat{T}(\mathrm{west}(u))$ in the augmented filling $\hat{T}$ of $T$; see \cite[Equation (7.9)]{HLMvW11}. Specializing both sides of (\ref{E:quasiL}) at $t=0$ and $t=1$, we have 
\begin{align}\label{E:L01}
	\CMcal{L}_{\alpha}(0)=S_{\alpha}\quad \mbox{ and }\quad \CMcal{L}_{\alpha}(1)=M_{\alpha}.
\end{align}
Since the symmetric Macdonald polynomial is a linear combination of the non-symmetric Macdonald polynomials over the field $\mathbb{Q}(q,t)$ according to \cite[Proposition 5.3.1]{HHL08}, \cite{Mac952} and \cite[Lemma 2.5 (a)]{Marshall: 99}, and the HL function is a specialization of symmetric Macdonald polynomial at $q=0$, it follows that 
\begin{align}\label{E:plam}
	P_{\lambda}(t)=\sum_{\lambda(\alpha)=\lambda}\CMcal{L}_{\alpha}(t)=\sum_{\lambda(\gamma^+)=\lambda}E_{\gamma}(X_n;0,t),
\end{align}
as a generalization of (\ref{E:sS}). Meanwhile, (\ref{E:plam}) also implies an extension of (\ref{E:Kost}). Note that $\CMcal{L}_{\alpha}(X;t)$ is quasisymmetric as stated in \cite[Proposition 7.1]{HLMvW11}, thus it can be expanded in the monomial basis of QSym, resulting in that 
\begin{align}\label{E:monoL}
	\CMcal{L}_{\alpha}(t)=\sum_{\beta}d_{\alpha\beta}(t)M_{\beta}=M_{\alpha}+\sum_{\lambda(\beta)<\lambda(\alpha)}d_{\alpha\beta}(t)M_{\beta}
\end{align}
for some $d_{\alpha\beta}(t)\in \mathbb{Z}[t]$ such that $d_{\alpha\beta}(0)=K_{\alpha\beta}$ and $d_{\alpha\beta}(1)=\delta_{\alpha\beta}$ by (\ref{E:quasiS}) and (\ref{E:L01}). It follows from (\ref{E:quasiL}) that if $d_{\alpha\beta}(t)\ne 0$, then $\lambda(\beta)<\lambda(\alpha)$ or ($\alpha=\beta$ and $d_{\alpha\alpha}(t)=1$).

%Moreover, this implies that for any permutation $\delta\in \mathfrak{S}_n$, 
%\begin{align*}
%	\delta\circ \CMcal{L}_{\alpha}(t)=\CMcal{L}_{\delta\alpha}(t).
%\end{align*}
Combining (\ref{E:monoL}) with (\ref{E:Pm11}) and the second equality of (\ref{E:sS}), we find that 
\begin{align*}
	P_{\lambda}(t)=\sum_{\mu\le\lambda}c_{\lambda\mu}(t)m_{\mu}&=\sum_{\mu\le \lambda}c_{\lambda\mu}(t)\sum_{\lambda(\gamma)=\mu}M_{\gamma}
	=\sum_{\lambda(\alpha)=\lambda}\sum_{\gamma}d_{\alpha\gamma}(t)M_{\gamma},
\end{align*}
implying that 
\begin{align*}
	\sum_{\lambda(\alpha)=\lambda}d_{\alpha\gamma}(t)=c_{\lambda\mu}(t)
\end{align*}
for any composition $\gamma$ with $\lambda(\gamma)=\mu$. This is an extension of (\ref{E:Kost}). 
\begin{example}
	$\CMcal{L}_{21}(t)=M_{21}+(1-t)M_{111}$ and  $\CMcal{L}_{12}(t)=M_{12}+(1-t^2)M_{111}$. Therefore,
	$$P_{21}(t)=\CMcal{L}_{21}(t)+\CMcal{L}_{12}(t)=m_{21}+(2-t-t^2)m_{111}.$$
\end{example}

\begin{remark}
	Hivert introduced the quasi-symmetric polynomial $G_{\alpha}(X;t)$ using divided difference operators \cite[Definition 6.1]{Hivert:00}, which includes the fundamental quasisymmetric function $F_{\alpha}(X)=G_{\alpha}(X;0)$ and the monomial quasisymmetric function $M_{\alpha}(X)=G_{\alpha}(X;1)$. 
	The polynomial $G_{\alpha}(X;t)$ exhibits interesting properties, yet it is not a refinement of the HL function $P_{\lambda}(X;t)$. That is,  for $t=0$, 
	\begin{align*}
		s_{\lambda}=P_{\lambda}(X;0)\ne \sum_{\lambda(\alpha)=\lambda}G_{\alpha}(X;0)=\sum_{\lambda(\alpha)=\lambda}F_{\alpha}.
	\end{align*}
\end{remark}

\subsection{Quasisymmetric functions in noncommuting variables}
The Hopf algebra of quasisymmetric functions in noncommuting variables (NCQSym) appeared in the thesis of Hivert \cite{Hi04} in 2004. Subsequently, Bergeron and Zabrocki \cite{BZ:09} proved that NCSym and NCQSym are free and cofree. Lazzeroni \cite{Laz:23} introduced quasisymmetric power-sum bases in NCQSym  that are indexed by set compositions.

The quasisymmetric functions in NCQSym are defined by relating set compositions to compositions via the map $\Delta$. A quasisymmetric function in noncommuting variables is a bounded degree formal power series $f\in \mathbb{Q}[[\bx]]=\mathbb{Q}[[\bx_1,\bx_2,\ldots]]$ such that for all compositions $\alpha=(\alpha_1,\ldots,\alpha_n)$ and $\beta=(\beta_1,\ldots,\beta_n)$ with $\Delta(\alpha)=\Delta(\beta)$, 
 \begin{align*}
 	[\bx_{\alpha_1}\bx_{\alpha_2}\cdots \bx_{\alpha_k}]f=[\bx_{\beta_1}\bx_{\beta_1}\cdots \bx_{\beta_n}]f.
 \end{align*}
 Define the graded Hopf algebra
\begin{align*}
	\mathrm{NCQSym}=\mathrm{NCQSym}^0 \oplus \mathrm{NCQSym}^1 \oplus \cdots \subseteq \mathbb{Q}[[\bx]],
\end{align*}
where $\mathrm{NCQSym}^0=\mathrm{span}\{1\}$ and the $n$th graded $\mathrm{NCQSym}^n$ for $n\ge 1$ has the monomial basis
\begin{align*}
	\mathrm{NCSym}^n=\mathrm{span}\{\bM_{\vartheta}:\vartheta\vDash [n]\}.
\end{align*} 
The {\em monomial quasisymmetric function} $\bM_{\vartheta}$ for a set composition $\vartheta\vDash [n]$ in NCQSym is 
\begin{align*}
	\bM_\vartheta=\sum_{(i_1, \dots, i_n)} \bx_{i_1}\cdots \bx_{i_n}
\end{align*}
summed over all sequences $(i_1, \dots, i_n)$ of positive integers such that $i_a=i_b$ if and only if $a\sim_{\pi(\vartheta)} b$, and $i_a<i_b$ whenever the block containing $a$ precedes the block containing $b$ in the linear order of blocks in $\vartheta$. By definition, for a set partition $\pi$,
\begin{align*}
	\bm_{\pi}=\sum_{\pi(\vartheta)=\pi} \bM_{\vartheta},
\end{align*}
which behaves like the monomial symmetric functions in Sym. 
\begin{example} For $\pi=13/2$, we have $\bm_{13/2}=\bM_{13/\!/2}+\bM_{2/\!/13}$ where 
	\begin{align*}
		\bM_{13/\!/2}&=\sum_{i<j}\bx_i\bx_j\bx_i=\bx_1\bx_2\bx_1+\bx_1\bx_3\bx_1+\bx_2\bx_3\bx_2+\cdots, \\
		\bM_{2/\!/13}&=\sum_{i>j}\bx_i\bx_j\bx_i=\bx_2\bx_1\bx_2+\bx_3\bx_1\bx_3+\bx_3\bx_2\bx_3+\cdots.
	\end{align*}
\end{example}
\subsection{Lifted and non-lifted quasisymmetric HL functions}
We now transform the quasisymmetric HL function $\CMcal{L}_{\alpha}(t)$ in QSym into a polynomial in NCQSym by extending the domain of the lifting map $\tilde{\rho}$ in subsection \ref{ss:lift}. By the projection map $\rho$, we obtain
\begin{align*}
	\rho(\bM_{\vartheta})=M_{\alpha(\vartheta)}.
\end{align*}
The lifting map $\tilde{\rho}$ is defined to be the right inverse of $\rho$ such that $\rho\tilde{\rho}=\varepsilon$ in QSym (not only in Sym). Equivalently, the lifting map $\tilde{\rho}:\mathrm{QSym}\rightarrow \mathrm{NCQSym}$ is given by
   \begin{align}\label{E:lift2}
   	\tilde{\rho}(M_{\alpha})=\frac{\alpha!}{|\alpha|!}\sum_{\alpha(\vartheta)=\alpha}\bM_{\vartheta},
   \end{align}
   and then extended linearly. We will verify that $\rho\tilde{\rho}=\varepsilon$ in QSym. 
   Let  $\vartheta$ be a set composition with $\alpha=\alpha(\vartheta)$. Since there are exactly $|\alpha|!/(\alpha!\alpha^!)$ set partitions $\pi$ satisfying $\lambda(\pi)=\lambda(\alpha)$, and there are $\alpha^!$ to rearrange the blocks of $\pi$ of equal size, we find that 
   \begin{align*}
   	|\{\vartheta\vDash [n]:\alpha(\vartheta)=\alpha\}|=\frac{|\alpha|!}{\alpha!\alpha^!}\alpha^!=\frac{|\alpha|!}{\alpha!}.
   \end{align*}
   It follows that 
    \begin{align*}
   	\rho\tilde{\rho}(M_{\alpha})=\frac{\alpha!}{|\alpha|!}M_{\alpha}\sum_{\alpha(\vartheta)=\alpha}1=M_{\alpha},
   \end{align*}
   namely, $\rho\tilde{\rho}=\varepsilon$ is the identity map in QSym. 

  The {\em lifted quasisymmetric HL function} $\bL_{\alpha}(\bX_n;t)$ for a composition $\alpha\vDash n$ in noncommuting variables $\bX_n=(\bx_1,\cdots,\bx_n)$ is defined by applying the lifting map $\tilde{\rho}$ to the quasisymmetric HL function in (\ref{E:monoL}). 
   \begin{align*}
   	\bL_{\alpha}(t)&=\bL_{\alpha}(\bX_n;t)=n!\tilde{\rho}(\CMcal{L}_{\alpha}(t))=\sum_{\beta}\beta!d_{\alpha\beta}(t)\sum_{\alpha(\vartheta)=\beta}\bM_{\vartheta},\\
   	&=\alpha!\sum_{\alpha(\vartheta)=\alpha}\bM_{\vartheta}+\sum_{\lambda(\beta)<\lambda(\alpha)}\beta!d_{\alpha\beta}(t)\sum_{\alpha(\vartheta)=\beta}\bM_{\vartheta}.
   \end{align*}
   Consequently, the lifting map $\tilde{\rho}$ on both sides of (\ref{E:plam}) gives
   \begin{align}\label{E:PL}
   	\bP_{\lambda}(t)=\sum_{\lambda(\alpha)=\lambda}\bL_{\alpha}(t).
   \end{align}
   The {\em lifted quasisymmetric Schur function} is defined to be 
   \begin{align}\label{E:liftQS}
   	\bS_{\alpha}=\bL_{\alpha}(0)=\alpha!\sum_{\alpha(\vartheta)=\alpha}\bM_{\vartheta}+\sum_{\lambda(\beta)<\lambda(\alpha)}\beta!K_{\alpha\beta}\sum_{\alpha(\vartheta)=\beta}\bM_{\vartheta}.
   \end{align}
   Therefore, $\bL_{\alpha}(t)$ unifies $\bS_{\alpha}$ and $\alpha!\bM_{\alpha}=\bL_{\alpha}(1)$. 
   \begin{example}\label{Example:L21}
   	$\bP_{21}(t)=\bL_{21}(t)+\bL_{12}(t)=2(\bm_{12/3}+\bm_{13/2}+\bm_{23/1})+(2-t-t^2)\bm_{1/2/3}$, where $\bL_{21}(t)$ and $\bL_{12}(t)$ are given by
   	\begin{align*}
    \bL_{21}(t)&=2(\bM_{12/\!/3}+\bM_{13/\!/2}+\bM_{23/\!/1})+(1-t)(\bM_{1/\!/2/\!/3}+\bM_{1/\!/3/\!/2})\\
    &\qquad+(1-t)(\bM_{2/\!/1/\!/3}+\bM_{2/\!/3/\!/1}+\bM_{3/\!/1/\!/2}+\bM_{3/\!/2/\!/1}),\\
    \bL_{12}(t)&=2(\bM_{3/\!/12}+\bM_{2/\!/13}+\bM_{1/\!/23})+(1-t^2)(\bM_{1/\!/2/\!/3}+\bM_{1/\!/3/\!/2})\\
    &\qquad+(1-t^2)(\bM_{2/\!/1/\!/3}+\bM_{2/\!/3/\!/1}+\bM_{3/\!/1/\!/2}+\bM_{3/\!/2/\!/1}).
	\end{align*}
   \end{example}
   
   \section{Quasisymmetric Hall-Littlewood functions in NCQSym}\label{S:6}
   In this section, we introduce the quasisymmetric HL function $\bL_{\vartheta}(\bx;t)$ in noncommuting variables $\bx=(\bx_1,\ldots,\bx_n)$ with coefficients in $\mathbb{Z}[t]$ for a set composition $\vartheta$ and present a sequence of its properties, in parallel with Section \ref{S:3}.
   \begin{definition}
   	Let $\vartheta$ be a set composition with $\alpha(\vartheta)=\alpha$ and $\ell(\vartheta)\le n$. The quasisymmetric HL function in noncommuting variables $\bX_n=(\bx_1,\ldots,\bx_n)$ is defined by
   	\begin{align}\label{E:quasiLnc}
   		\bL_{\vartheta}(t)=\bL_{\vartheta}(\bX_n;t)&=\sum_{\beta}d_{\alpha\beta}(t)\sum_{g\in \mathfrak{S}_{\vartheta}}g\circ \bM_{\delta_{\vartheta}\langle \beta\rangle}
   	\end{align}
   where $d_{\alpha\beta}(t)=[M_{\beta}]\CMcal{L}_{\alpha}\in\mathbb{Z}[t]$ and $\mathfrak{S}_\vartheta=\{\delta\in \mathfrak{S}_n \,\vert\,\delta\vartheta=\vartheta\}$ is the stabilizer group of $\vartheta$, and the quasisymmetric Schur function is defined to be $\bS_{\vartheta}=\bL_{\vartheta}(0)$.
   	\end{definition}
   \begin{example}
   		\begin{align*}
   		\bL_{12/\!/3}(t)&=2\bM_{12/\!/3}+(1-t)(\bM_{1/\!/2/\!/3}+\bM_{2/\!/1/\!/3}),\\
   		\bL_{3/\!/12}(t)&=2\bM_{3/\!/12}+(1-t^2)(\bM_{3/\!/1/\!/2}+\bM_{3/\!/2/\!/1}).
   	\end{align*}
   \end{example}
   Since most of the proofs follow the ideas in Sections \ref{S:3} and \ref{S:4}, we only sketch the proofs.
    \begin{theorem}\label{T:quasi-basis} The set $\mathcal{B}_n=\{{\bL}_{\vartheta}(\bX_n;t): \vartheta \vDash [n]\}$ is a $\mathbb{Z}[t]$-basis of quasisymmetric functions of homogeneous degree $n$ in noncommuting variables $\bX_n=(\bx_1,\ldots,\bx_n)$, and it is invariant under any permutation of $[n]$. That is, $w\mathcal{B}_n=\mathcal{B}_n$ for any $w\in  \mathfrak{S}_n$.
   \end{theorem}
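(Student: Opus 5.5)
The plan is to mirror the proof of Theorem \ref{T:basis}: first establish a transformation law under $\mathfrak{S}_n$, the analogue of Lemma~\ref{lem:pi_to_sigma}, and then a unitriangularity argument against the monomial basis $\{\bM_\vartheta\}$.

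\textbf{Step 1: the stabilizer.} Since the blocks of a set composition are ordered, $\delta\vartheta=\vartheta$ forces $\delta\vartheta_i=\vartheta_i$ for every $i$. Hence $\mathfrak{S}_\vartheta=R_{\pi(\vartheta)}=\prod_i\mathfrak{S}_{\vartheta_i}$, there is no $H$-part, and $|\mathfrak{S}_\vartheta|=\alpha!$. Lemma~\ref{lem:R} then gives $\mathfrak{S}_{w\vartheta}=w\,\mathfrak{S}_\vartheta\, w^{-1}$ for every $w\in\mathfrak{S}_n$.

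\textbf{Step 2: the transformation law $w\circ\bL_\vartheta(t)=\bL_{w\vartheta}(t)$.} Put $\eta=w\vartheta$, so $\alpha(\eta)=\alpha$. Since $\eta=\delta_\eta\langle\alpha\rangle=w\delta_\vartheta\langle\alpha\rangle$, the permutation $\delta_\eta^{-1}w\delta_\vartheta$ stabilizes $\langle\alpha\rangle$. Conjugating as in (\ref{E:wG}), we get $\delta_\eta=w g^\dagger\delta_\vartheta$ for some $g^\dagger\in\mathfrak{S}_\vartheta$. Then the identity $\delta\circ\bM_{\vartheta'}=\bM_{\delta\vartheta'}$ (the NCQSym analogue of (\ref{E:mbph}), immediate from the definition of $\bM$) yields
\begin{align*}
\sum_{g\in\mathfrak{S}_\eta}g\circ\bM_{\delta_\eta\langle\beta\rangle}=\sum_{g'\in\mathfrak{S}_\vartheta}(wg'g^\dagger)\circ\bM_{\delta_\vartheta\langle\beta\rangle}=w\circ\sum_{g''\in\mathfrak{S}_\vartheta}g''\circ\bM_{\delta_\vartheta\langle\beta\rangle}.
\end{align*}
Summing over $\beta$ with the weights $d_{\alpha\beta}(t)$ gives the law. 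In particular $w\mathcal{B}_n\subseteq\mathcal{B}_n$. Since $w$ acts bijectively on set compositions, we get $w\mathcal{B}_n=\mathcal{B}_n$.

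\textbf{Step 3: triangularity.} By (\ref{E:monoL}), $d_{\alpha\beta}(t)\neq0$ forces either $\beta=\alpha$ with $d_{\alpha\alpha}=1$, or $\lambda(\beta)<\lambda(\alpha)$. Each term $g\circ\bM_{\delta_\vartheta\langle\beta\rangle}=\bM_{g\delta_\vartheta\langle\beta\rangle}$ is a monomial basis element indexed by a set composition with block-size composition $\beta$. For $\beta=\alpha$ we have $g\delta_\vartheta\langle\alpha\rangle=g\vartheta=\vartheta$, so the $\beta=\alpha$ part equals $\alpha!\,\bM_\vartheta$. Every other term is some $\bM_{\vartheta'}$ with $\lambda(\alpha(\vartheta'))<\lambda(\alpha)$.

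Now order set compositions by any linear extension of dominance on $\lambda(\alpha(\cdot))$. The transition matrix from $\{\bL_\vartheta\}$ to $\{\bM_\vartheta\}$ is then triangular with diagonal entries $\alpha(\vartheta)!$. Each $\bL_\vartheta$ is a finite combination of the $\bM$'s, hence lies in $\mathrm{NCQSym}^n$, and the two sets have the same cardinality. So $\mathcal{B}_n$ is linearly independent and spans over $\mathbb{Q}[t]$.

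\textbf{Main obstacle.} The delicate point is the ring of coefficients. The diagonal entries $\alpha!$ are not units in $\mathbb{Z}[t]$; for example $\bL_{12/\!/3}$ has leading coefficient $2$. So the triangularity argument directly gives a $\mathbb{Q}[t]$-basis, exactly as in Theorem~\ref{T:basis}, together with linear independence over $\mathbb{Z}[t]$. For the $\mathbb{Z}[t]$ statement I would either interpret the spanning claim relative to the rescaled lattice spanned by the $\alpha(\vartheta)!\,\bM_\vartheta$, or normalize by $1/\alpha!$. I would first check whether the intended statement is the $\mathbb{Q}[t]$ version, in parallel with Theorem~\ref{T:basis}.

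A secondary check is the conjugation step in Step 2. It requires that the stabilizer of $\langle\alpha\rangle$, transported by $\delta_\vartheta$, lands inside $\mathfrak{S}_\vartheta$, and this follows from Lemma~\ref{lem:R}.
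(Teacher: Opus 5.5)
Your argument is the paper's proof: it also obtains $\bL_{w\vartheta}=w\circ\bL_\vartheta$ by rerunning Lemma~\ref{lem:pi_to_sigma}(2) with $w\mathfrak{S}_\vartheta w^{-1}=\mathfrak{S}_{w\vartheta}$, and then concludes from triangularity against $\{\bM_\vartheta\}$ with diagonal entries $|\mathfrak{S}_\vartheta|=\pi(\vartheta)!$. Both arguments use $\vartheta=\delta_\vartheta\langle\alpha\rangle$, which holds only if $\delta_\vartheta$ is chosen to satisfy it; the literal choice $\delta_{\pi(\vartheta)}$ fails already for $\vartheta=2/\!/1$. Your caveat about the ring is also correct: the diagonal entries are not units, and for the one-block $\vartheta=12\vDash[2]$ one gets $\bL_\vartheta=2\bM_{12}+(1-t)(\bM_{1/\!/2}+\bM_{2/\!/1})$, so expanding $\bM_{12}$ forces the coefficient $\tfrac12$, and the argument (the paper's included) yields a $\mathbb{Q}[t]$-basis exactly as in Theorem~\ref{T:basis}, not a $\mathbb{Z}[t]$-basis.
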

   
    \begin{proof}
    	First, note that $w\mathfrak{S}_{\vartheta}w^{-1}=\mathfrak{S}_{\phi}$ and $w\circ \bM_{\vartheta}=\bM_{w\vartheta}$ for any permutation $w\in \mathfrak{S}_n$, and we claim that $\bL_{\phi}(t)=w\bL_{\vartheta}(t)$ if $\phi=w\vartheta$. Following the argument in Lemma \ref{lem:pi_to_sigma} (2), we find
    	\begin{align*}
    		\sum_{g\in \mathfrak{S}_{\phi}}g\circ \bM_{\delta_{\phi}\langle\beta\rangle}=w\sum_{g\in \mathfrak{S}_{\vartheta}}g\circ\bM_{\delta_{\vartheta}\langle\beta\rangle},
    	\end{align*}
        thus implying that $\bL_{\phi}(t)=w\bL_{\vartheta}(t)$, and hence $w\mathcal{B}_n=\mathcal{B}_n$ for all permutations $w\in\mathfrak{S}_n$. It remains to show that $\mathcal{B}_n$ is a $\mathbb{Z}[t]$-basis of NCQSym$^n$. Since $d_{\alpha\beta}(t)\ne 0$ only when $\lambda(\beta)<\lambda(\alpha)$ or $\alpha=\beta$, and in the latter case $d_{\alpha\alpha}(t)=1$, the transition matrix from the $\bL_{\vartheta}(\bx;t)$ to the $\bM$-basis is triangular with nonzero entries $[\bM_{\vartheta}]\bL_{\vartheta}=|\mathfrak{S}_{\vartheta}|=\pi(\vartheta)!$ on the main diagonal. It follows from (\ref{E:quasiLnc}) that $\mathcal{B}_n$ is also a $\mathbb{Z}[t]$-basis of NCQSym$^n$, as desired.         
    \end{proof}
    We next show that the lifted quasisymmetric HL function $\bL_{\alpha}(t)$ is equal to the sum of quasisymmetric HL functions $\bL_{\vartheta}(t)$ in NCQSym. This is a quasisymmetric analogue of the first equality in (\ref{E:refineP}).
    \begin{theorem}\label{E:sumL}
    	For a composition $\alpha$, the lifted quasisymmetric HL function is given by
    	\begin{align}\label{E:L1}
    		\bL_{\alpha}(t)=\sum_{\alpha(\vartheta)=\alpha}\bL_{\vartheta}(t).
    	\end{align}
        Consequently, the lifted HL function is given by
        \begin{align*}
        	\bP_{\lambda}(t)=\sum_{\lambda(\alpha)=\lambda}\bL_{\alpha}(t)
        	=\sum_{\lambda(\alpha(\vartheta))=\lambda
        	}\bL_{\vartheta}(t).
        \end{align*}
        In particular for $t=0$, we have
        \begin{align*}
        	\bS_{\alpha}=\sum_{\alpha(\vartheta)=\alpha}\bS_{\vartheta}\quad\,\mbox{ and }\quad\,
        	\bS_{\lambda}=\sum_{\lambda(\alpha)=\lambda}\bS_{\alpha}=\sum_{\lambda(\alpha(\vartheta))=\lambda}\bS_{\vartheta}.
        \end{align*}     
    \end{theorem}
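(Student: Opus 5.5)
The plan is to mirror the proof of Theorem~\ref{T:sumP1}: realize the sum over all $\vartheta$ with $\alpha(\vartheta)=\alpha$ as a sum over an orbit of $\mathfrak{S}_n$, then use the equivariance $\bL_{w\vartheta}(t)=w\circ\bL_{\vartheta}(t)$ from the proof of Theorem~\ref{T:quasi-basis}. The set $\{\vartheta\vDash[n]:\alpha(\vartheta)=\alpha\}$ is exactly the $\mathfrak{S}_n$-orbit of $\langle\alpha\rangle$, since $\vartheta=\delta_\vartheta\langle\alpha\rangle$. Its stabilizer is $\mathfrak{S}_{\langle\alpha\rangle}=\prod_i \mathfrak{S}_{\langle\alpha\rangle_i}$. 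Because the blocks of a set composition are ordered, this stabilizer has no $H$-part, and so has order $\alpha!$. This is consistent with the orbit size $|\alpha|!/\alpha!$ computed before (\ref{E:lift2}).

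Summing $w\circ\bL_{\langle\alpha\rangle}(t)$ over coset representatives $w\in\mathfrak{S}_n/\mathfrak{S}_{\langle\alpha\rangle}$ and inserting (\ref{E:quasiLnc}) with $\delta_{\langle\alpha\rangle}$ taken as the identity gives
\begin{align*}
\sum_{\alpha(\vartheta)=\alpha}\bL_{\vartheta}(t)=\sum_{\beta}d_{\alpha\beta}(t)\sum_{g\in\mathfrak{S}_n}g\circ\bM_{\langle\beta\rangle}.
\end{align*}
The inner sum runs over all of $\mathfrak{S}_n$. Using $g\circ\bM_{\vartheta}=\bM_{g\vartheta}$, each $\bM_{\vartheta}$ with $\alpha(\vartheta)=\beta$ occurs exactly $|\mathfrak{S}_{\langle\beta\rangle}|=\beta!$ times. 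The sum therefore equals $\sum_\beta \beta!\,d_{\alpha\beta}(t)\sum_{\alpha(\vartheta)=\beta}\bM_\vartheta$, which is the displayed formula for $\bL_\alpha(t)=n!\tilde\rho(\mathcal{L}_\alpha(t))$ obtained from (\ref{E:lift2}). This proves (\ref{E:L1}).

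The main point needing care is the well-definedness of $\delta_\vartheta$ and of $\delta_\vartheta\langle\beta\rangle$. Since $\delta_\vartheta$ is only determined up to the stabilizer, I must check that replacing $\delta_\vartheta$ by $\delta_\vartheta g^*$ with $g^*\in\mathfrak{S}_{\langle\alpha\rangle}$ does not change the inner sum in (\ref{E:quasiLnc}). The intended argument is the one in Lemma~\ref{lem:pi_to_sigma}: write $\delta_\vartheta g^*=g^\dagger\delta_\vartheta$ with $g^\dagger\in\mathfrak{S}_\vartheta$, and absorb $g^\dagger$ into the sum over $\mathfrak{S}_\vartheta$. This step has a possible gap, because $g^*$ must act compatibly on $\langle\beta\rangle$ as well as on $\langle\alpha\rangle$. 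I would first try to settle it by fixing the convention $\delta_\vartheta=\delta_{\pi(\vartheta)}$ exactly as the paper does and using the equivariance already claimed in Theorem~\ref{T:quasi-basis}. If that proves insufficient, I would argue the coefficient count directly on $[\bM_\eta]$.

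The remaining statements are then formal consequences. Combining (\ref{E:L1}) with (\ref{E:PL}) gives $\bP_\lambda(t)=\sum_{\lambda(\alpha)=\lambda}\bL_\alpha(t)=\sum_{\lambda(\alpha(\vartheta))=\lambda}\bL_\vartheta(t)$. Setting $t=0$ and using $\bL_\alpha(0)=\bS_\alpha$, $\bL_\vartheta(0)=\bS_\vartheta$ and $\bP_\lambda(0)=\bS_\lambda$ yields the Schur identities.
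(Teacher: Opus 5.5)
Your main line is the paper's proof essentially verbatim. You realize $\{\vartheta:\alpha(\vartheta)=\alpha\}$ as the orbit of $\langle\alpha\rangle$ with stabilizer of order $\alpha!$, and collapse the sum to $\sum_{\beta}d_{\alpha\beta}(t)\sum_{g\in\mathfrak{S}_n}g\circ\bM_{\langle\beta\rangle}$. You then count each $\bM_{\phi}$ with $\alpha(\phi)=\beta$ exactly $\beta!$ times, and finish with (\ref{E:PL}) and $t=0$.

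Your well-definedness paragraph puts the difficulty in the wrong place, and that needs correcting. There is no compatibility condition on $\langle\beta\rangle$. The relation $\delta_\vartheta g^*=g^\dagger\delta_\vartheta$ is an identity in $\mathfrak{S}_n$, so $\delta_\vartheta g^*\langle\beta\rangle=g^\dagger\delta_\vartheta\langle\beta\rangle$ holds automatically. Moreover $g^\dagger=\delta_\vartheta g^*\delta_\vartheta^{-1}$ lies in $\mathfrak{S}_\vartheta$ and is absorbed, \emph{provided} $\delta_\vartheta\langle\alpha\rangle=\vartheta$.

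The real danger is the fallback you propose. The convention $\delta_\vartheta=\delta_{\pi(\vartheta)}$ depends only on $\pi(\vartheta)$. Hence $\delta_\vartheta\langle\alpha\rangle\neq\vartheta$ whenever the equal-size blocks of $\vartheta$ are not listed in increasing order of their minima, and then both the equivariance in Theorem~\ref{T:quasi-basis} and (\ref{E:L1}) fail. For $\alpha=(1,1)$ this convention gives $\delta_{1/\!/2}=\delta_{2/\!/1}=\varepsilon$ with trivial stabilizers. So $\bL_{1/\!/2}(t)=\bL_{2/\!/1}(t)=\bM_{1/\!/2}$, and their sum $2\bM_{1/\!/2}$ is not the lifted $\bL_{11}(t)=\bM_{1/\!/2}+\bM_{2/\!/1}$.

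You must instead take $\delta_\vartheta$ to be \emph{any} permutation with $\delta_\vartheta\langle\alpha\rangle=\vartheta$. This is the property the paper asserts, and the one its proof tacitly uses. With this reading no separate equivariance lemma is needed. Since $\mathfrak{S}_\vartheta=\delta_\vartheta\mathfrak{S}_{\langle\alpha\rangle}\delta_\vartheta^{-1}$, one has $\sum_{g\in\mathfrak{S}_\vartheta}g\circ\bM_{\delta_\vartheta\langle\beta\rangle}=\sum_{h\in\mathfrak{S}_{\langle\alpha\rangle}}(\delta_\vartheta h)\circ\bM_{\langle\beta\rangle}$, which is visibly independent of the choice of $\delta_\vartheta$. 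The cosets $\delta_\vartheta\mathfrak{S}_{\langle\alpha\rangle}$, over all $\vartheta$ with $\alpha(\vartheta)=\alpha$, partition $\mathfrak{S}_n$. This gives the collapse to $\sum_{w\in\mathfrak{S}_n}w\circ\bM_{\langle\beta\rangle}$ directly.
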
  
   \begin{proof}
   	Following a similar argument to that in Theorem \ref{T:sumP1}, we can show that for a composition $\alpha$ of $n$, 
   	\begin{align*}
   		\sum_{\alpha(\vartheta)=\alpha} \bL_{\vartheta}(t)&=\sum_{w\in \mathfrak{S}_n/\mathfrak{S}_{\langle\alpha\rangle}} w\bL_{\langle\alpha\rangle}(t)
   		=\sum_{w\in \mathfrak{S}_n/\mathfrak{S}_{\langle\alpha\rangle}} \sum_{\beta}d_{\alpha\beta}(t)\sum_{g\in \mathfrak{S}_{\langle\alpha\rangle}}wg\circ \bM_{\langle \beta\rangle}\\
   		&=\sum_{\beta}d_{\alpha\beta}(t)\sum_{w\in \mathfrak{S}_n}w\circ \bM_{\langle \beta\rangle}
   		=\sum_{\beta}d_{\alpha\beta}(t)\beta!\sum_{\alpha(\phi)=\beta}\bM_{\phi}=\bL_{\alpha}(t). 
   	\end{align*}
   Consequently, the statement is proved by using (\ref{E:PL}).
 \end{proof}
 \begin{example} 
	$\bL_{21}(t)=\bL_{12/\!/3}(t)+\bL_{23/\!/1}(t)+\bL_{13/\!/2}(t)$, where 
	\begin{align*}
		\bL_{12/\!/3}(t)&=2\bM_{12/\!/3}+(1-t)(\bM_{1/\!/2/\!/3}+\bM_{2/\!/1/\!/3}),\\
		\bL_{13/\!/2}(t)&=(23)\bL_{12/\!/3}(t),\\
		\bL_{23/\!/1}(t)&=(123)\bL_{12/\!/3}(t),
	\end{align*}
	and it agrees with the formula for $\bL_{21}(t)$ in Example \ref{Example:L21}.
\end{example}
Finally, we discuss the star product of a lifted quasisymmetric function and a non-lifted Schur function in NCQSym. Let us recall the multiplication rule for a quasisymmetric Schur function and a Schur function due to Haglund, Luoto, Mason and van Willigenburg \cite[Theorem 5.1]{HLMvW11}. For a composition $\alpha$ and a partition $\lambda$, 
\begin{align}\label{E:prodQSs}
	S_{\alpha}s_{\lambda}=\sum_{\beta}C_{\alpha\lambda}^{\beta}S_{\beta},
\end{align}
 where the coefficient $C_{\alpha\lambda}^{\beta}$ is the number of Littlewood--Richardson skew composition tableaux (LRCs) with entries in $[n]$ and of shape $\beta/\alpha$ with content $\lambda^*$ (the reverse of $\lambda$). We refer the reader to \cite[Section 5]{HLMvW10} for the precise definition of LRCs. 
 
 We now state the star multiplication rule for a lifted quasisymmetric function and a non-lifted Schur function in noncommuting variables.
 \begin{theorem}\label{T:pieri2}
 	For $n\ge 2$ and $r\in [n-1]$,
 	let $\alpha\vDash (n-r)$ and $\lambda\vdash r$. Then 
 	\begin{align*}
 		\bS_{\alpha}\star \bs_{\lambda}=\sum_{\beta}C_{\alpha\lambda}^{\beta}\bS_{\beta},
 	\end{align*}
  with $C_{\alpha\lambda}^{\beta}=[S_{\beta}](S_{\alpha}s_{\lambda})\in\mathbb{N}$.
 \end{theorem}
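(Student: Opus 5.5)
I will follow the proof of Theorem \ref{T:pieri1} step by step, replacing tableaux for Schur functions by semistandard composition tableaux and the classical product rule by (\ref{E:prodQSs}).

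The first step is a tableau formula for the lifted quasisymmetric Schur function, analogous to (\ref{E:tabP}). By \cite{HLMvW11}, $S_\alpha=\sum_{U\in \mathrm{SSC}(\alpha)} x^U$. Label the cells of each $U$ by $U_1,\dots,U_m$ in an arbitrary fixed order, with $m=|\alpha|$. I claim that
\begin{align*}
\bS_{\alpha}=\sum_{w\in\mathfrak{S}_m}\sum_{U\in \mathrm{SSC}(\alpha)}\bx_{c(U_{w(1)})}\cdots\bx_{c(U_{w(m)})}.
\end{align*}
To prove this, I compare coefficients of $\bM_\vartheta$. Take $U$ of weight $\beta$ and $w\in\mathfrak{S}_m$. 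The word $\bx_{c(U_{w(1)})}\cdots\bx_{c(U_{w(m)})}$ is a monomial of $\bM_\vartheta$, where $\vartheta$ is the set composition with $\alpha(\vartheta)=\beta$ recording the positions $a$ at which each value $c(U_{w(a)})$ occurs. Here the order of the blocks of $\vartheta$ is the order of the values, and this is precisely why the NCQSym monomial basis appears. Exactly $\beta!$ permutations $w$ produce the same word. Hence the coefficient of $\bM_\vartheta$ is $\beta!K_{\alpha\beta}$, which matches (\ref{E:liftQS}) together with (\ref{E:lift2}). One point needs care: the sum over $U$ ranges over all weak-composition contents, and grouping by the flattened weight gives $K_{\alpha\beta}$ because of quasisymmetry. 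So the claim holds.

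The second step uses (\ref{E:tabP2}) at $t=0$, which gives $\bs_\lambda=\sum_{Q\in\mathrm{SSYT}(\lambda)}\bx_{c(Q_1)}\cdots\bx_{c(Q_r)}$ in reading order. Then
\begin{align*}
\bS_\alpha\star\bs_\lambda=\sum_{\delta\in\S_{r,n}}\delta^{-1}\circ\sum_{v\in\mathfrak{S}_{n-r}}\sum_{U,Q}\bx_{c(U_{v(1)})}\cdots\bx_{c(U_{v(n-r)})}\bx_{c(Q_1)}\cdots\bx_{c(Q_r)}.
\end{align*}

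The key step is a content-preserving bijection between pairs $(U,Q)\in\mathrm{SSC}(\alpha)\times\mathrm{SSYT}(\lambda)$ and pairs $(V,L)$, where $V\in\mathrm{SSC}(\beta)$ and $L$ ranges over a set of size $C_{\alpha\lambda}^{\beta}$. It should satisfy $x^U x^Q=x^V$, which is the bijective content of (\ref{E:prodQSs}). This is the main obstacle, because (\ref{E:prodQSs}) is stated only as an identity of generating functions. I would first try the insertion proof of \cite{HLMvW11}: insert the reading word of $Q$ into $U$ by the composition-tableau insertion. That insertion is content-preserving and bijective onto pairs consisting of a $V$ and an LRC recording tableau. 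If that fails, quasisymmetry and the identity $S_\alpha s_\lambda=\sum C^\beta_{\alpha\lambda}S_\beta$ already give equality of the multisets of contents. An arbitrary content-preserving matching then suffices, because the argument only uses the contents.

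Given the bijection, I label the cells of $V$ so that $c(V_i)=c(U_i)$ for $i\le n-r$ and $c(V_{n-r+i})=c(Q_i)$ for $i\le r$. The noncommutative identity (\ref{E:nc}) then holds for every $v$. Finally, I use the coset decomposition (\ref{E:coset}) exactly as in the proof of Theorem \ref{T:pieri1}. Combining $\sum_{\delta\in\S_{r,n}}\sum_{v\in\mathfrak{S}_{n-r}}$ into $\sum_{w\in\mathfrak{S}_n}$, the first step turns the right-hand side into $\sum_\beta C^\beta_{\alpha\lambda}\bS_\beta$.
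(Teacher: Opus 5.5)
Your proposal is correct and follows the paper's own argument, which the paper only sketches: the marked-tableau formula for $\bS_\alpha$ obtained from the tableau and monomial expansion of $S_\alpha$ plus quasisymmetry, the reading-order formula for $\bs_\lambda$, a content-preserving matching supplied by (\ref{E:prodQSs}), and the coset merging from the proof of Theorem \ref{T:pieri1}; your fallback of an arbitrary content-preserving matching is all that is needed, since only contents enter. One point to make explicit: with the convention (\ref{E:delta}), applying $\delta^{-1}\circ$ to the $v$-word gives $\bx_{c(V_{v(\delta(1))})}\cdots\bx_{c(V_{v(\delta(n))})}$, so the merging actually uses the right-coset decomposition $\mathfrak{S}_n=\bigsqcup_{\delta\in\S_{r,n}}\mathfrak{S}_{n-r}\delta$ rather than (\ref{E:coset}), and this holds because $\S_{r,n}$ is also a system of right coset representatives (for instance, by induction on $r$, $\delta$ is determined by the tuple $(\delta^{-1}(n-r+1),\ldots,\delta^{-1}(n))$).
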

 \begin{proof}
 	Much in spirit of the proof of Theorem \ref{T:pieri1}, we employ the tableau formula (\ref{E:quasiS}) for $S_{\alpha}$, and then transform it to the marked tableau formula for $\bS_{\alpha}$. This, together with (\ref{E:prodQSs}) and the construction of the star product, completes the proof.
 \end{proof}
\begin{example}
	Take $\alpha=(1,2)$, $\lambda=(1)$ and $\bX_4=(\bx_1,\bx_2,\bx_3,\bx_4)$, then
	\begin{align*}
		\bS_{12}\star\bs_{1}=\bS_{13}+\bS_{112}+\bS_{121}.
	\end{align*}
For instance, we examine the coefficients $[\bM_{13/\!/24}](\bS_{12}\star\bs_{1})$ and $[\bM_{13/\!/24}](\bS_{13}+\bS_{112}+\bS_{121})$. First, $[\bM_{14/\!/23}](\bS_{12}\bs_1)=[\bM_{34/\!/12}](\bS_{12}\bs_1)=2$. Therefore,
\begin{align*}
	[\bM_{13/\!/24}](\bS_{12}\star\bs_{1})&=[\bM_{13/\!/24}](34)\circ(\bS_{12}\bs_1)+[\bM_{13/\!/24}](14)\circ(\bS_{12}\bs_1)\\
	&=[\bM_{14/\!/23}](\bS_{12}\bs_1)+[\bM_{34/\!/12}](\bS_{12}\bs_1)=4.
\end{align*}
This coincides with $[\bM_{13/\!/24}](\bS_{13}+\bS_{112}+\bS_{121})=[\bM_{13/\!/24}]\bS_{13}=2!2![M_{22}]S_{13}=4$.
\end{example}
			
			%%%%%%%%%%%%%%%%%%%%%%%%%%%%%%%%%%%%%%%%%%%%%%%%%%%%%%%%%%%%%%%%%%%%%%%%%

			\section*{Acknowledgements}
			This work was initiated when both authors visited the University of British Columbia in 2025, and we would like to thank Stephanie van Willigenburg for many helpful discussions.
			The second author is supported by the National Nature Science Foundation of China (NSFC), Projects No. 12201529 and No. 12571358.

		\end{document}